\title{Simple commutative algebras in Deligne's categories $\rep{t}$}
\author{Luke Sciarappa}
\newcommand{\h}{\mathfrak{h}}
\newcommand{\Rep}[1]{\mathbf{Rep}(S_{#1})}%Real representation category.
\newcommand{\Gal}{\operatorname{Gal}}
\newcommand{\Alg}{\operatorname{Alg}}
\newcommand{\Ind}[2]{\operatorname{Ind}_{S_{#1}\times S_{#2 - #1}}^{S_{#2}}}% Induction
\newcommand{\ind}[1]{{}^\uparrow#1}
\newcommand{\Res}[2]{\operatorname{Res}_{S_{#1}\times S_{#2 - #1}}^{S_{#2}}}
\newcommand{\triv}{\mathbb{C}}%One-dimensional trivial rep in S_whatever.
\newcommand{\bigness}{level}%max number of boxes in any irreducible summand
\newcommand{\Hom}{\mathrm{Hom}}% Hom in any category.
\newcommand{\parti}%SET of partitions
[2]{P_{#1 + #2}}
\newcommand{\C}{\mathbb{C}}
\newcommand{\A}{\mathbb{A}_{\C}^1}
\newcommand{\Spec}{\operatorname{Spec}}
\newcommand{\ob}{\operatorname{Ob}}
\newcommand{\mor}{\operatorname{Mor}}
\newcommand{\mon}[1]{\mathbf{MonAbCat}_{#1}}
\newcommand{\cat}[1]{\mathbf{Cat}_{#1}}
\newtheorem{thm}{Theorem}[section]
\newaliascnt{lem}{thm}
\newtheorem{lem}[lem]{Proposition}
\newtheorem{cor}{Corollary}[thm]
\theoremstyle{definition}
\newtheorem{defn}[thm]{Definition}
\begin{document}

\DeclareDocumentCommand\repO{ m g }{%
    {\mathrm{Rep}_0(S_{#1}%
        \IfNoValueF {#2} { , #2}%
    )}%
}
\DeclareDocumentCommand\rep{ m g }{%
    {\mathrm{Rep}(S_{#1}%
        \IfNoValueF {#2} { , #2}%
    )}%
}
\DeclareDocumentCommand\Rep{ m g }{%
    {\mathbf{Rep}(S_{#1}%
        \IfNoValueF {#2} { , #2}%
    )}%
}
\maketitle

\begin{abstract}
We show that in the Deligne categories $\rep{t}$ for $t$ a transcendental number, the only simple algebra objects are images of simple algebras in the category of representations of a symmetric group under a canonical induction functor. They come in families which interpolate the families of algebras of functions on the cosets of $H\times S_{n-k}$ in $S_n$, for a fixed subgroup $H$ of $S_k$.
\end{abstract}
\section{Introduction}
The categories $\rep{t}$, defined for $t\in\C$, which interpolate the categories $\Rep{n}$ of representations of the symmetric group, were introduced by P.~Deligne \cite{del}. In addition to generalizing the representation theory of the symmetric groups, they were among the first symmetric $\C$-linear tensor categories known that did not admit a fiber functor to the category of complex vector spaces or supervector spaces.

A symmetric monoidal structure is exactly what is needed on a category for it to have a good notion of commutative algebra object, and the additional structure of a symmetric tensor category --- the $\C$-linear structure in particular --- makes it a good setting for commutative algebra. This theory of commutative algebra can, dually, be thought of as a theory of algebraic geometry internal to the given category; for example, in the symmetric tensor category $\mathbf{Rep}(G)$ of complex representations of a group, the internal theory of algebraic geometry amounts to, more or less, the study of schemes over $\C$ with a $G$-action. 

In light of this, it is natural to study the commutative (unital, associative) algebra objects in $\rep{t}$. The most basic question in a theory of commutative algebra is to describe the simple algebras. This is known in the case of $\Rep{n}$; simple algebras correspond to transitive actions. However, in the case of $\rep{t}$, internal algebras are no longer ordinary algebras with extra structure; the forgetful fiber functor from $\Rep{n}$ to vector spaces lets us consider algebras in $\Rep{n}$ as algebras in the category of vector spaces equipped with a compatible $S_n$-action, but such a functor does not exist for $\rep{t}$. 

Using induction functors, given a simple algebra $A$ in $\Rep{n}$, one can define a family of algebras $\ind{A}(t)\in\rep{t}$ which are simple for almost all $t$. 
We show that when $t$ is transcendental, all simple algebras in $\rep{t}$ have this form. That is, all the simple algebras in $\rep{t}$ for transcendental $t$ arise as algebras coming from $\Rep{k}$ for some $k$. Simple algebras in $\Rep{k}$ have the form $\C[S_k/H]$ for $H$ a subgroup of $S_k$, as shown in \autoref{app}. Intuitively, the simple algebras in $\rep{t}$ can be thought of as \mbox{``$\C[S_t/H\times S_{t-k}]$''}, for $H$ a subgroup of $S_k$; rigorously, they come in families $\ind{A}(t)$ such that for sufficiently large $n$, the projection to $\Rep{n}$ of $\ind{A}(n)\in\rep{n}$ is $\C[S_n/H\times S_{n-k}]$ for a fixed $H\subset S_k$. 

In \autoref{background}, we summarize background information about $\rep{t}$. In \autoref{cats-over}, we discuss how $\rep{t}$ and related categories can be considered as categories internal to schemes over $\A$. In \autoref{cons}, we establish the results we will need pertaining to constructible sets. In \autoref{sec:main}, we prove the main theorem, classifying simple algebras in $\rep{t}$ for transcendental $t$. In \autoref{app}, we establish some facts about the groups $S_n$ and categories $\Rep{n}$ which are necessary for the arguments about $\rep{t}$.

\section*{Acknowledgments}

The author thanks Akhil Mathew, who first introduced the author to the category $\rep{t}$. The author thanks Professor Pavel Etingof, who suggested this direction of investigation. The author greatly thanks Nathan Harman, for many helpful suggestions and conversations about these ideas.
 Additionally, the author gladly thanks MIT PRIMES, for enabling this research, and in particular Dr.~Tanya Khovanova for helpful advice about mathematical writing.

\section{Background}\label{background}
By an \emph{algebra}, we always mean an associative, commutative, unital algebra, unless explicitly denied.
The disjoint union of two finite sets $a$, $b$ is denoted $a+b$, and the natural maps are denoted $\iota_0 : a \hookrightarrow a+b$ and $\iota_1 : b \hookrightarrow a+b$.
The set of partitions of a set $a$ is denoted $P_a$. For $n\in\mathbb N$, there is a canonical finite set $[n] = \{k\in\mathbb N|1\leq k\leq n\}$ with $n$ elements.
\begin{defn}[\cite{cos}]
For $R$ a commutative ring and $t\in R$, the $R$-linear monoidal category $\repO{t}{R}$ is defined to have finite sets as objects and $\Hom(a,b) = R[\parti{a}{b}]$.
\begin{itemize}
\item The identity element of $\Hom(a,a)$ is the partition whose parts are $\{\iota_0x, \iota_1x\}$ for all $x \in a$. 
\item Composition is the $R$-bilinear extension of a function \[-\circ- : \parti{b}{c}\times\parti{a}{b} \to R[\parti{a}{c}],\] defined as follows. Given $\rho \in \parti{b}{c}$ and $\pi \in \parti{a}{b}$, let $\rho\vee\pi$ be the finest partition on $a + b + c$ containing $\rho$ and $\pi$. Letting $\rho\cdot\pi$ denote the restriction of $\rho\vee\pi$ to $a + c$, and letting $d(\rho,\pi)$ denote the number of parts of $\rho\vee\pi$ containing only elements of $b$, we define $\rho\circ\pi$ to be  $t^{d(\rho,\pi)}\rho\cdot\pi$. 
\item The tensor product of two objects is their disjoint union, with unit $[0]$. On morphisms, the tensor product is the $R$-bilinear extension of a function \[-\otimes- : \parti{a}{b}\times\parti{a'}{b'}\to \parti{a+a'}{b+b'}.\] The partition $\pi\otimes\pi'$ of $a+a'+b+b'$ partitions $a+b$ as $\pi$ does and $a'+b'$ as $\pi'$ does, and is finest among such. 
\end{itemize}
\end{defn}
\begin{defn}[\cite{cos}]
The category $\rep{t}{R}$ is defined to be the category obtained by starting with $\repO{t}{R}$, then adjoining direct sums, then taking the idempotent completion (also called the Karoubification). We write $\rep{t}$ for $\rep{t}{\mathbb{C}}$, etc. 
\end{defn}Mostly, we will be concerned with the case when $R$ is the complex numbers $\C$.
Certain objects in $\rep{t}$ get special names: the monoidal unit $([0], \text{id}_{[0]})$ will be denoted $\triv$ and the generating object $([1], \text{id}_{[1]})$ will be denoted $\h$. These will also be used to denote the corresponding objects in $\Rep{k}$, namely $\mathbb{C}$ with the trivial action and $\mathbb{C}^{k}$ with the action of permuting coordinates, respectively.

\begin{defn}
Given a map $\varphi : R \to R'$, there is a family of monoidal functors $\varphi_*: \rep{t}{R} \to \rep{\varphi(t)}{R'}$ indexed by $t\in R$. (We suppress $t$ in the notation.)  Specifically, we define $\varphi_{*0}: \repO{t}{R}\to\repO{\varphi(t)}{R'}$ to be the identity on objects and send a morphism $f = \sum_{\pi\in\parti{a}{b}} f_\pi \pi : a \to b$ to $\sum_{\pi\in\parti{a}{b}}\varphi(f_\pi)\pi : a \to b$. The functor $\varphi_*$ is then defined to be the extension of $\varphi_{*0}$ to $\rep{t}{R}$ which comes from the universal properties of completion under direct sums and idempotents.
\end{defn}
The functors $\varphi_*$ are ``$\varphi$-linear'', in the sense that for morphisms $f$, $g$ and $r \in R$, the equation $\varphi_*(r f + g) = \varphi(r)\varphi_*(f)+\varphi_*(g)$ holds. In particular, even if $R = R'$, they are generally not $R$-linear, only linear over the subring of elements fixed by $\varphi$. As a special case, if $\theta \in \Gal(\mathbb{C}/\overline{\mathbb{Q}})$, then $\theta_*$ is a (monoidal) equivalence of $\overline{\mathbb{Q}}$-linear categories, since $(\theta^{-1})_*$ is inverse to $\theta_*$. It follows from this that if $X$ is an algebra in $\rep{t}$, the induced algebra structure on $\theta_*X\in\rep{\theta(t)}$ is simple if and only if $X$ is. 

In general, we call the $\varphi_*$ \emph{base change} functors. If they come from an automorphism of $\mathbb{C}$ over $\overline{\mathbb{Q}}$, we may also call them \emph{Galois functors}.
\begin{defn}
For two $R$-linear categories $\mathcal A$, $\mathcal B$ the \emph{external tensor product} $\mathcal{A}\boxtimes\mathcal{B}$ is generated under direct sums by objects $A \boxtimes B$ for $A \in \mathcal A$, $B \in \mathcal B$, with $\Hom_{\mathcal A\boxtimes\mathcal B}(A \boxtimes B, A'\boxtimes B') = \Hom_{\mathcal{A}}(A,A')\otimes_R\Hom_{\mathcal{B}}(B,B')$. If $\mathcal A$ and $\mathcal B$ are monoidal categories, $\mathcal A\boxtimes\mathcal B$ inherits a monoidal structure with $(A \boxtimes B) \otimes (A' \boxtimes B') = (A \otimes A') \boxtimes (B \otimes B')$, etc. 
\end{defn}

\begin{thm}[\cite{del}]
For all $R$-linear monoidal categories $\mathcal T$, the category of $R$-linear monoidal functors $\rep{t}{R}\to\mathcal T$ is equivalent to the category of commutative Frobenius algebras of dimension $t$ in $\mathcal T$.
\end{thm}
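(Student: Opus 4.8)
The plan is to make both sides of the asserted equivalence concrete and to exhibit mutually quasi-inverse comparison functors; the statement is really the claim that the partition category $\repO{t}{R}$ is the free symmetric monoidal $R$-linear category on a commutative Frobenius algebra of dimension $t$, with $\rep{t}{R}$ obtained from it by formally adjoining direct sums and idempotents. (Throughout, $\mathcal T$ and the comparison functors are taken to be symmetric monoidal, as is implicit in the statement, since the notion of commutative algebra refers to the symmetry; when $\mathcal T$ is additive and idempotent-complete it suffices to treat $\repO{t}{R}$, the general case following from the universal properties of the two completions.) The starting point is that the generating object $\h = ([1],\mathrm{id}_{[1]})$ carries a canonical commutative Frobenius algebra structure whose structure maps are the evident partitions: the multiplication $\h\otimes\h\to\h$ and the unit $\triv\to\h$ are the partitions of $[2]+[1]$ and of $[0]+[1]$ consisting of a single part, and the comultiplication and counit are their reflections across the source/target divide. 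A direct computation with the composition rule — tracking in particular the factor $t^{d(\rho,\pi)}$ produced whenever a part of $\rho\vee\pi$ lies entirely in the middle set — verifies associativity, commutativity, their duals, the Frobenius relation, and that the composite $\triv\to\h\to\triv$ of unit followed by counit equals $t\cdot\mathrm{id}_{\triv}$. Hence any $R$-linear symmetric monoidal functor $F\colon\rep{t}{R}\to\mathcal T$ transports this structure to a commutative Frobenius algebra of dimension $t$ on $F(\h)$, and a monoidal natural transformation $F\Rightarrow G$ restricts to a homomorphism $F(\h)\to G(\h)$ of such algebras; this defines the functor $\Phi$ from the functor category to the algebra category.

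For the quasi-inverse $\Psi$, I would take a commutative Frobenius algebra $(A,m,u,\Delta,\varepsilon)$ of dimension $t$ in $\mathcal T$ and define $F_A$ by sending the object $a$ to $A^{\otimes a} := \bigotimes_{x\in a}A$ and a basis partition $\pi$ of $a+b$ to the morphism $A^{\otimes a}\to A^{\otimes b}$ that reads each part of $\pi$ as an iterated multiplication followed by an iterated comultiplication (a ``spider'': a part with $p$ legs on the $a$-side and $q$ on the $b$-side is sent to $\Delta^{(q)}\circ m^{(p)}$, where $m^{(0)} = u$ and $\Delta^{(0)} = \varepsilon$), tensors these spiders together, and uses the symmetry of $\mathcal T$ to reroute the legs into the orders prescribed by $a$ and $b$; one then extends $R$-linearly on $\Hom$-spaces and along the direct-sum and idempotent completions. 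On morphisms, $\Psi$ sends a homomorphism $f\colon A\to B$ to the monoidal natural transformation with component $f^{\otimes a}$ at $a$, which is natural exactly because $f$ intertwines $m,u,\Delta,\varepsilon$. The real content is that $F_A$ is well defined and functorial: that the spider attached to a part is independent of the bracketing and ordering used to build it (associativity, coassociativity, commutativity, cocommutativity of $A$); that splicing spiders along a composite and discarding the closed components reproduces the underlying partition $\rho\cdot\pi$ (the Frobenius relation); and — the crux — that each closed component so discarded contributes exactly the scalar $\varepsilon\circ u = t$, so that $F_A(\rho)\circ F_A(\pi)$ equals $F_A(\rho\circ\pi)$ together with its combinatorial prefactor $t^{d(\rho,\pi)}$. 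Monoidality of $F_A$ falls out of the definition of the tensor product of partitions as a side-by-side juxtaposition of spiders.

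It remains to see that $\Phi$ and $\Psi$ are mutually quasi-inverse. One direction is immediate: $\Phi(\Psi(A)) = F_A(\h) = A$, carrying exactly the original structure maps by construction, and compatibly with homomorphisms. For the other, given $F$, both $F$ and $\Psi(\Phi(F))$ are symmetric monoidal functors on $\repO{t}{R}$ agreeing on $\h$ and on its four structure maps; since every object of $\repO{t}{R}$ is a tensor power of $\h$ and every morphism is an $R$-linear combination of composites and tensor products of the structure maps, symmetries and identities, the two functors are related by a canonical monoidal natural isomorphism there, and this isomorphism extends uniquely across the completions to $\rep{t}{R}$. Assembling these data into an equivalence of categories — indeed one natural in $\mathcal T$ — is then bookkeeping. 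I expect the well-definedness and functoriality of $\Psi$ to be the main obstacle: it amounts to a normal-form theorem matching composites of the Frobenius structure maps with the partition basis, the genus-zero shadow of the classification of two-dimensional topological field theories, and I would organize the argument around such a normal form, reducing a general composite of structure maps to a standard juxtaposition of spiders and showing each closed loop that appears evaluates to $t$.
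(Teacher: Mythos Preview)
Your proposal is correct and follows essentially the same approach as the paper: one direction is $F\mapsto F(\h)$, and the inverse sends a Frobenius algebra $A$ to the functor taking $a$ to $A^{\otimes a}$ and a partition to the tensor product over its parts of the ``spider'' maps $\Delta^{(q)}\circ m^{(p)}$, extended by $R$-linearity and along the direct-sum and idempotent completions. The paper merely sketches this and refers to Deligne for the verifications, whereas you spell out in more detail the functoriality check (in particular the appearance of the factor $t^{d(\rho,\pi)}$ from closed components) and the quasi-inverse argument.
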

\begin{proof}
 One half of this equivalence sends a monoidal functor $F$ to $F(\h)$ with the induced Frobenius algebra structure. For the other half, given a commutative Frobenius algebra $T$, the multiplication and comultiplication can be assembled into maps $\phi_{a,b} : T^{\otimes a} \to T \to T^{\otimes b}$. A functor $\repO{t}{R}\to \mathcal T$ can be defined by sending $a$ to $T^{\otimes a}$ and a partition $\pi : a \to b$ to $\bigotimes_p \phi_{a\cap p,b\cap p}$, where $p$ ranges through parts of $\pi$. Since all linear functors preserve direct sums and images of idempotents, the extension to $\rep{t}{R}$ is essentially uniquely determined. For details, see \cite{del}.
\end{proof}
This universal property allows the definition of restriction and induction functors.
\begin{defn}[\cite{eti}]
The \emph{restriction functor} $\Res{k}{t} : \rep{t} \to \Rep{k}\boxtimes\rep{t-k}$ is defined to be the monoidal functor that sends $\h\in\rep{t}$ to the Frobenius algebra $\h\boxtimes \triv \oplus \triv\boxtimes\h$. The \emph{induction functor} $\Ind{k}{t} : \Rep{k}\boxtimes\rep{t-k}\to\rep{t}$ is defined to be the left adjoint to restriction.
\end{defn}
\begin{lem}\label{comm-lem}
 Induction and restriction commute, in an appropriate sense, with the Galois functors. Specifically, 
\begin{equation}
\theta_* \Ind{k}{t} \cong \Ind{k}{\theta(t)} (1\boxtimes\theta_*)
\end{equation}
as functors $\Rep{k}\boxtimes\rep{t-k}\to\rep{\theta(t)}$, and
 \begin{equation}
\Res{k}{\theta(t)}\theta_*\cong (1\boxtimes \theta_*) \Res{k}{t}
\end{equation}
as functors $\rep{t} \to \Rep{k}\boxtimes\rep{\theta(t)-k}$.
\end{lem}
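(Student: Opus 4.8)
The plan is to establish the restriction isomorphism first, using Deligne's universal property, and then to deduce the induction isomorphism by passing to left adjoints. For the restriction statement, I would begin by observing that both $\Res{k}{\theta(t)}\theta_*$ and $(1\boxtimes\theta_*)\Res{k}{t}$ are symmetric monoidal functors $\rep{t}\to\Rep{k}\boxtimes\rep{\theta(t)-k}$ which are $\theta$-semilinear (a scalar $r$ factors out of a morphism as $\theta(r)$), being in each case a composite of a $\C$-linear monoidal functor with a $\theta$-semilinear one. Next I would note that a $\theta$-semilinear symmetric monoidal functor $F:\rep{t}\to\mathcal T$ is determined up to isomorphism by the commutative Frobenius algebra $F(\h)$: precomposition with the equivalence $(\theta^{-1})_*:\rep{\theta(t)}\xrightarrow{\sim}\rep{t}$ turns $F$ into a $\C$-linear symmetric monoidal functor out of $\rep{\theta(t)}$, to which the theorem above applies, and since $(\theta^{-1})_*$ is the identity on objects this correspondence is just evaluation at $\h$; one also checks that $\theta$-semilinearity forces $F(\h)$ to have dimension $\theta(t)$, which matches the target. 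Thus it suffices to check that the two functors send $\h$, with its canonical Frobenius structure, to the same Frobenius algebra in $\Rep{k}\boxtimes\rep{\theta(t)-k}$.

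That check is a direct computation. The multiplication, comultiplication, unit, and counit of $\h$ are given by partitions with integer coefficients, hence are fixed by every base change functor $\varphi_*$; so $\theta_*\h=\h$ with unchanged Frobenius structure, and therefore $\Res{k}{\theta(t)}\theta_*\h=\Res{k}{\theta(t)}\h=\h\boxtimes\triv\oplus\triv\boxtimes\h$ by the definition of restriction. On the other side, $\Res{k}{t}\h=\h\boxtimes\triv\oplus\triv\boxtimes\h$ in $\Rep{k}\boxtimes\rep{t-k}$, and $1\boxtimes\theta_*$ — where $1$ denotes the identity of $\Rep{k}$ (or the evident $\theta$-semilinear base-change equivalence, which fixes the permutation representation $\h=\C^{k}$ and its $\mathbb Z$-defined structure maps), while $\theta_*$ fixes $\triv$ and $\h$ in $\rep{t-k}$ — carries this to $\h\boxtimes\triv\oplus\triv\boxtimes\h$ in $\Rep{k}\boxtimes\rep{\theta(t)-k}$ with the same structure maps. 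The two Frobenius algebras then coincide, which gives the restriction isomorphism.

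For the induction statement, I would pass to left adjoints in the restriction isomorphism. Since $\theta_*$ is an equivalence with inverse $(\theta^{-1})_*$, the left adjoint of $\Res{k}{\theta(t)}\theta_*$ is $(\theta^{-1})_*\Ind{k}{\theta(t)}$, and similarly the left adjoint of $(1\boxtimes\theta_*)\Res{k}{t}$ is $\Ind{k}{t}(1\boxtimes(\theta^{-1})_*)$; uniqueness of adjoints then yields
\begin{equation*}
(\theta^{-1})_*\,\Ind{k}{\theta(t)}\;\cong\;\Ind{k}{t}\,(1\boxtimes(\theta^{-1})_*).
\end{equation*}
Applying $\theta_*$ on the left and $1\boxtimes\theta_*$ on the right — or, more simply, replacing $\theta$ by $\theta^{-1}$ and then $t$ by $\theta(t)$ — rearranges this into $\theta_*\Ind{k}{t}\cong\Ind{k}{\theta(t)}(1\boxtimes\theta_*)$, as wanted.

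The step I expect to be the main obstacle is the first one: making precise in what sense Deligne's universal property controls $\theta$-semilinear (rather than $\C$-linear) monoidal functors out of $\rep{t}$, and pinning down the exact meaning of $1\boxtimes\theta_*$ on the $\Rep{k}$-factor. Once those are set up, the remainder is adjunction bookkeeping together with the fact that all the structure morphisms in sight are defined over $\mathbb Z$, hence invariant under base change.
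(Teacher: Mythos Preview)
Your proposal is correct and follows essentially the same approach as the paper: both prove the restriction statement first by reducing, via the universal property of $\rep{t}$, to checking that the two functors agree on the Frobenius algebra $\h$, and then deduce the induction statement by passing to left adjoints (the paper writes this out as a Yoneda chain of Hom-isomorphisms, which is the same content). The only cosmetic difference is that the paper precomposes with $\theta_*^{-1}$ to obtain genuinely $\C$-linear functors out of $\rep{\theta(t)}$ before invoking the universal property, whereas you phrase this as a statement about $\theta$-semilinear functors and then reduce; these are equivalent, and your explicit remark that the structure maps of $\h$ are defined over $\mathbb Z$ is exactly what makes the verification go through.
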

\begin{proof}
\sloppy
This can be seen using the universal property of $\rep{t}$. The  $\mathbb{C}$-linear monoidal functors $(1\boxtimes\theta_*)(\Res{k}{t})(\theta_*^{-1})$ and  $\Res{k}{\theta(t)}: \rep{\theta(t)} \to \Rep{k}\boxtimes\rep{\theta(t)-k}$, when evaluated on the canonical commutative Frobenius algebra $\h$ in $\rep{\theta(t)}$, give isomorphic algebras in $\Rep{k}\boxtimes\rep{\theta(t)-k}$. 
Therefore the functors are isomorphic, and a categorical argument using the Yoneda lemma gives the corresponding statement for induction:
\begin{align*}
&\Hom_{\rep{\theta(t)}}(\Ind{k}{\theta(t)}(1 \boxtimes \theta_*)A, B)\\\cong\:
&\Hom_{\Rep{k}\boxtimes\rep{\theta(t)-k}}((1 \boxtimes \theta_*)A,\Res{k}{\theta(t)}B)\\\cong\:
&\Hom_{\Rep{k}\boxtimes\rep{t-k}}(A,(1\boxtimes\theta_*^{-1})\Res{k}{\theta(t)}B)\\\cong\:
&\Hom_{\Rep{k}\boxtimes\rep{t-k}}(A,\Res{k}{t}\theta_*^{-1}B)\\\cong\: 
&\Hom_{\rep{t}}(\Ind{k}{t}A,\theta_*^{-1}B)\\\cong\:
&\Hom_{\rep{\theta(t)}}(\theta_*\Ind{k}{t}A,B).\qedhere
\end{align*}
\end{proof}
\fussy
$\Rep{k}$ is a semisimple category, and the simple objects, which are the irreducible representations, are naturally labeled by integer partitions of $k$. An \emph{integer partition} $\lambda$ is an eventually zero nonincreasing sequence of natural numbers. It is a partition `of $k$' if the sum of the sequence is $k$. In this case, $k$ is also called the \emph{size} of $\lambda$ and we write $k = |\lambda|$. These properties, suitably modified, also hold of $\rep{t}$, as seen in the following.
\begin{thm}[\cite{cos}]
The indecomposable objects of $\rep{t}{F}$ are naturally labeled by integer partitions of arbitrary size. These objects are simple, and $\rep{t}{F}$ is semisimple, when $t$ is not a nonnegative integer.
\end{thm}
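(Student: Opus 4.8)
The plan is to reduce both claims to the structure of the endomorphism algebras $\End_{\rep{t}{F}}([k]) \cong F[\parti{[k]}{[k]}]$ --- the partition algebras --- and to invoke their semisimplicity. Since $\rep{t}{F}$ is built from $\repO{t}{F}$ by adjoining direct sums and then Karoubifying, every object is a retract of a finite direct sum of the objects $[k] = \h^{\otimes k}$, and as the $\Hom$ spaces are finite-dimensional over $F$ the Krull--Schmidt theorem applies; hence every indecomposable object is a summand of some $[k]$, i.e.\ arises from a primitive idempotent in $\End_{\rep{t}{F}}([k])$, with a primitive idempotent of $\End_{\rep{t}{F}}([k])$ and its image under the unital embedding $\End_{\rep{t}{F}}([k]) \hookrightarrow \End_{\rep{t}{F}}([k+1])$ (adjoin a through-part) identified. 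Moreover, a Karoubian $F$-linear category with finite-dimensional $\Hom$ spaces is abelian semisimple as soon as its categorical trace pairing $\langle f, g\rangle = \operatorname{tr}(g\circ f)$ is nondegenerate on all $\Hom$ spaces; and since every $\h$, hence every $[k]$, is self-dual with the one-part partitions as evaluation and coevaluation, this comes down to nondegeneracy, for every $k$, of the Gram form $(\pi,\sigma)\mapsto t^{c(\pi,\sigma)}$ on $\Hom_{\rep{t}{F}}([0],[k]) = F[P_{[k]}]$, where $c(\pi,\sigma)\in\mathbb{Z}_{\geq 0}$ is explicit.

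Next I would invoke the computation of this Gram determinant: it is a nonzero polynomial in $t$ all of whose roots are nonnegative integers --- the classical Gram determinant formula for partition algebras (Martin; see the references in \cite{cos}). Consequently, when $t$ is not a nonnegative integer all of these forms are nondegenerate, $\rep{t}{F}$ is semisimple, and in particular every indecomposable object is simple. Equivalently, one may simply cite Martin's theorem that the partition algebra $\End_{\rep{t}{F}}([k])$ is semisimple exactly when $t\notin\{0,1,\dots,2k-2\}$: a fixed $t\notin\mathbb{Z}_{\geq 0}$ avoids all of these sets at once.

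For the labeling by partitions I would induct on $k$, using the standard idempotent truncation of the partition algebra: for $t\neq 0$ there is an idempotent $e_k\in\End_{\rep{t}{F}}([k])$ with $e_k\,\End_{\rep{t}{F}}([k])\,e_k \cong \End_{\rep{t}{F}}([k-1])$, and with $\End_{\rep{t}{F}}([k])/(\End_{\rep{t}{F}}([k])\,e_k\,\End_{\rep{t}{F}}([k])) \cong F[S_k]$, the quotient by the span of the partitions of propagating number less than $k$. When $\End_{\rep{t}{F}}([k])$ is semisimple, its simple modules fall into those killed by the ideal generated by $e_k$ --- the simple $F[S_k]$-modules, labeled by the integer partitions of $k$ (here using that $F$, of characteristic zero, splits $S_k$) --- and those not killed by it --- the simple $\End_{\rep{t}{F}}([k-1])$-modules. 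Starting from $\End_{\rep{t}{F}}([0]) = F$, induction identifies the indecomposable summands of $[k]$ with the integer partitions of size at most $k$, the partition $\lambda$ labeling the one that first appears in $[\,|\lambda|\,]$ but not in $[\,|\lambda|-1\,]$; passing to $k\to\infty$ labels all indecomposable objects of $\rep{t}{F}$ by integer partitions of arbitrary size. (For a nonnegative integer $t$ the same indexing set persists through the cellular structure of the partition algebra, though then the category is no longer semisimple.) That the labeling is natural --- compatible in particular, for an integer $n$ with $n - |\lambda| \geq \lambda_1$, with the usual labeling of the irreducibles of $\Rep{n}$ by the padded partitions $(n-|\lambda|,\lambda)$ --- is part of the specialization results of \cite{cos}.

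The only genuinely nontrivial ingredient is the semisimplicity of the partition algebras, equivalently the fact that the Gram determinant of the trace form is a nonzero polynomial in $t$ with only nonnegative-integer roots; this is Martin's theorem, proved from the recursive structure of these algebras. Everything else is the Krull--Schmidt bookkeeping of the reduction, the idempotent-truncation induction for the labeling, and a routine check that the resulting combinatorics match the conventions already in place for $\Rep{n}$.
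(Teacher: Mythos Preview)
The paper does not give a proof of this theorem at all: it is stated with the citation \cite{cos} and then simply used. So there is no ``paper's own proof'' to compare against; what you have written is in effect a sketch of the argument that Comes--Ostrik (building on Martin's work on partition algebras) actually give in the cited reference.

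Your sketch is essentially correct and follows that same route: reduce to the endomorphism algebras $\End([k])$ (the partition algebras), invoke Martin's semisimplicity criterion (equivalently the Gram determinant computation), and obtain the labeling by partitions via the tower structure $e_k\,\End([k])\,e_k \cong \End([k-1])$ with quotient $F[S_k]$. Two small remarks. First, the trace-form criterion you invoke for semisimplicity of a Karoubian $F$-linear category is correct, but the reduction ``it comes down to nondegeneracy on $\Hom([0],[k])$'' needs the observation that nondegeneracy on all $\Hom([m],[n])$ follows from nondegeneracy on all $\Hom([0],[m+n])$ via the duality $[m]^\vee\cong[m]$; you said this implicitly, but it is worth making explicit. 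Second, the labeling of indecomposables for \emph{all} $t$ (including nonnegative integers, where the idempotent-truncation argument breaks down) really does require the cellular/standard-module machinery that you gesture at in parentheses; in Comes--Ostrik this is handled by lifting idempotents along the specialization from generic $t$, not quite by cellularity per se, but the effect is the same.
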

The object corresponding to a partition $\lambda$ will be denoted $L(\lambda)$.

Since $\rep{t}$ is supposed to interpolate the categories $\Rep{k}$, it may seem surprising that $\rep{k}$ is not even semisimple. To pin down the precise relation that $\rep{k}$ bears to $\Rep{k}$, we introduce the notion of \bigness{}.
\begin{defn}
The \emph\bigness{} of an indecomposable object $L(\lambda)$ is defined to be the size $|\lambda|$ of $\lambda$. The \emph\bigness{} of any object is defined to be the maximum \bigness{} of any indecomposable summand of that object.
\end{defn}
\begin{thm}[{\cite[section 5]{del}}]\label{level-equiv}
For a natural number $k$ and an indecomposable object $X$, the following are equivalent: 
\begin{itemize}
\item The \bigness{} of $X$ is $k$. 
\item $k$ is the smallest natural number such that there is a monomorphism $X \hookrightarrow \Ind{k}{t}(B\boxtimes\triv)$ for some $B \in \Rep{k}$. 
\item $k$ is the smallest natural number such that there is a monomorphism $X \hookrightarrow \h^{\otimes k}$. 
\end{itemize}
\end{thm}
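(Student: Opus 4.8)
I will prove the theorem for $t$ not a nonnegative integer --- in particular for the transcendental $t$ with which this paper is concerned --- and refer to \cite[section 5]{del} for the (similar) general case. For such $t$ the categories $\rep{t}$, $\rep{t-k}$ and $\Rep{k}\boxtimes\rep{t-k}$ are semisimple by the theorem of \cite{cos} above, so a monomorphism into an object $Y$ is the same as a direct summand of $Y$, and for a simple object $Z$ one has $Z\hookrightarrow Y$ iff $\Hom(Z,Y)\neq 0$ iff $\Hom(Y,Z)\neq 0$. Since the indecomposable objects of $\rep{t}$ are exactly the $L(\lambda)$, I may fix an integer partition $\lambda$, put $X=L(\lambda)$, and note that the \bigness{} of $X$ is $|\lambda|$ by definition, so the first condition holds precisely when $k=|\lambda|$; it therefore suffices to show that the value of $k$ determined by the second condition and the one determined by the third are each $|\lambda|$.

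The plan is to deduce everything from the single assertion $(\star)$: \emph{for every integer partition $\nu$, the object $L(\nu)$ is a direct summand of $\h^{\otimes k}$ if and only if $|\nu|\leq k$.} Granting $(\star)$, the set of $k$ with $X\hookrightarrow\h^{\otimes k}$ is $\{k : k\geq|\lambda|\}$, whose smallest element is $|\lambda|$; this settles the third condition.

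For the second condition I would additionally use the ``pattern'' decomposition
\[ \h^{\otimes k}\;\cong\;\bigoplus_{j=0}^{k}\,\Delta_j^{\oplus S(k,j)},\qquad \Delta_j:=\Ind{j}{t}(\C[S_j]\boxtimes\triv), \]
with $S(k,j)$ the Stirling number of the second kind (so $S(k,j)\geq 1$ for $0\leq j\leq k$); this interpolates the splitting of $(\mathbb{C}^n)^{\otimes k}$ in $\Rep{n}$ by the equality pattern of a $k$-tuple, once one identifies $\Ind{j}{t}(\C[S_j]\boxtimes\triv)$ with the interpolation of $\mathbb{C}[S_n/S_{n-j}]$ (transitivity of induction). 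In particular $\Delta_k$ is a summand of $\h^{\otimes k}$. Now if $X\hookrightarrow\Ind{k}{t}(B\boxtimes\triv)$ for some $B\in\Rep{k}$, then --- $B$ being a summand of $\C[S_k]^{\oplus N}$ for some $N$ and $\Ind{k}{t}$ additive --- $\Ind{k}{t}(B\boxtimes\triv)$ is a summand of $\Delta_k^{\oplus N}$, hence of $(\h^{\otimes k})^{\oplus N}$, hence of $\h^{\otimes k}$ since $X$ is simple; so $k\geq|\lambda|$ by $(\star)$. Conversely $X\hookrightarrow\h^{\otimes|\lambda|}$ by $(\star)$, so by the display $X$ is a summand of $\Delta_j$ for some $j\leq|\lambda|$; but $\Delta_j$ is a summand of $\h^{\otimes j}$, so $X\hookrightarrow\h^{\otimes j}$, which by $(\star)$ forces $j=|\lambda|$, i.e.\ $X\hookrightarrow\Delta_{|\lambda|}=\Ind{|\lambda|}{t}(\C[S_{|\lambda|}]\boxtimes\triv)$. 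Hence the set of $k$ admitting such a $B$ is $\{k : k\geq|\lambda|\}$, with smallest element $|\lambda|$, which settles the second condition.

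Everything thus reduces to $(\star)$, and proving it is the step I expect to be the main obstacle. By semisimplicity the multiplicity of $L(\nu)$ in $\h^{\otimes k}$ is governed by the endomorphism algebra $\End(\h^{\otimes k})=\C[P_{[k]+[k]}]$, the partition algebra on $2k$ points; for transcendental $t$ this algebra is semisimple with simple modules labeled by the integer partitions of $0,1,\dots,k$, and the decomposition of $\h^{\otimes k}$ into the $L(\nu)$ agrees with that of $(\mathbb{C}^n)^{\otimes k}$ in $\Rep{n}$ for $n\gg 0$. Using the pattern decomposition at finite $n$ reduces $(\star)$ to the elementary fact that the irreducible $S_n$-module $V_{\nu[n]}$, with $\nu[n]=(n-|\nu|,\nu_1,\nu_2,\dots)$, occurs in $\mathbb{C}[S_n/S_{n-j}]$ for some $j\leq k$ iff $|\nu|\leq k$; and $V_{\nu[n]}$ occurs in $\mathbb{C}[S_n/S_{n-j}]=\operatorname{Ind}_{S_{n-j}}^{S_n}\triv$ iff, by Frobenius reciprocity and iterated branching, one may descend from $\nu[n]$ to the one-row partition $(n-j)$ by removing $j$ boxes, i.e.\ iff $\nu[n]_1=n-|\nu|\geq n-j$, i.e.\ iff $|\nu|\leq j$ --- and such a $j\leq k$ exists exactly when $|\nu|\leq k$. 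So the real content of $(\star)$ is the semisimple representation theory of the partition algebras (equivalently: that tensor powers of $\h$, and the $\Delta_j$, decompose in $\rep{t}$ for transcendental $t$ just as their counterparts do in $\Rep{n}$ for $n\gg 0$); granting this and the formal pattern decomposition, the rest is the bookkeeping with adjunctions above.
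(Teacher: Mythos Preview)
The paper does not prove this theorem; it simply cites \cite[section~5]{del}. Your plan therefore supplies more than the paper does: an actual argument for the semisimple case, deferring to Deligne only for nonnegative-integer $t$.

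Your reduction to $(\star)$ via the pattern decomposition $\h^{\otimes k}\cong\bigoplus_{j=0}^{k}\Delta_j^{\oplus S(k,j)}$ is correct and clean; the relevant orthogonal idempotents already live in $\End_{\repO{t}}([k])=\C[P_{[k]+[k]}]$, so the decomposition holds in $\rep{t}$ directly, not only after passing to $\Rep{n}$. One small overclaim: for the second condition you assert that the admissible set of $k$ is exactly $\{k:k\geq|\lambda|\}$, but what you actually showed is that $|\lambda|$ works and nothing smaller does --- which is all the statement requires. (Every larger $k$ does work too, e.g.\ via transitivity of induction, but you did not argue it.) The main soft point is the justification of $(\star)$ itself. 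Your route goes through $\Rep{n}$ for $n\gg 0$, but the bridge back to transcendental $t$ is really that the multiplicity of $L(\nu)$ in $\h^{\otimes k}$ is governed by the generic partition algebra, whose simple modules are labeled exactly by partitions of size $\leq k$, and that $L(\nu)$ is \emph{defined} in \cite{cos} so that this labeling matches. In other words, $(\star)$ is essentially the construction of the $L(\nu)$; the $\Rep{n}$-detour is illustrative rather than logically necessary, and making the link to the Comes--Ostrik definition explicit would make the argument self-contained and avoid the appearance of circularity (using integer $t$ to prove something about transcendental $t$).
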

From the above, we obtain the following.
\begin{cor}\label{level-cor} For any objects $X$ and $Y$, if $X$ has level $k$, and $Y$ has level $\ell$, then $X \otimes Y$ has level $k + \ell$. Also, for $B \in \Rep{k}$, if $X$ has level $\ell$, then $\Ind{k}{t} (B \boxtimes X)$ has level $k + \ell$. 
\end{cor}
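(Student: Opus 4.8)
The plan is to deduce both assertions from the three equivalent descriptions of \bigness{} in \autoref{level-equiv}, in each case proving an upper bound and a lower bound on the \bigness{} separately. One may reduce throughout to indecomposable objects: tensor product and the functors $\Ind{k}{t}(B\boxtimes-)$ commute with direct sums, and the \bigness{} of a direct sum is the largest \bigness{} of an indecomposable summand, so it suffices to take $X=L(\lambda)$ and $Y=L(\nu)$ with $|\lambda|=k$, $|\nu|=\ell$, and, for the second assertion, $B$ the irreducible representation of $S_k$ labelled by a partition $\lambda\vdash k$.

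For the upper bounds I would use the third characterization in \autoref{level-equiv}. Given monomorphisms $X\hookrightarrow\h^{\otimes k}$ and $Y\hookrightarrow\h^{\otimes\ell}$, exactness of $\otimes$ in the rigid category $\rep{t}$ gives a monomorphism $X\otimes Y\hookrightarrow\h^{\otimes k}\otimes\h^{\otimes\ell}=\h^{\otimes(k+\ell)}$; composing the inclusion of any indecomposable summand of $X\otimes Y$ with this shows every summand embeds in $\h^{\otimes(k+\ell)}$ and so has \bigness{} at most $k+\ell$. (Equivalently, the additive idempotent-complete subcategory generated by $\h^{\otimes m}$ is exactly the full subcategory of objects of \bigness{} $\le m$, and these subcategories multiply correctly under $\otimes$ because $\h^{\otimes k}\otimes\h^{\otimes\ell}=\h^{\otimes(k+\ell)}$.) For $\Ind{k}{t}(B\boxtimes X)$ I would instead use the second characterization together with transitivity of induction: inside $\rep{t-k}$ there is a monomorphism $X\hookrightarrow\Ind{\ell}{t-k}(D\boxtimes\triv)$ for some $D\in\Rep{\ell}$; applying $\Ind{k}{t}(B\boxtimes-)$ (which preserves monomorphisms, the source category being semisimple) and the isomorphism $\Ind{k}{t}\circ(1\boxtimes\Ind{\ell}{t-k})\cong\operatorname{Ind}_{S_{k+\ell}\times S_{t-k-\ell}}^{S_t}\circ(\operatorname{Ind}_{S_k\times S_\ell}^{S_{k+\ell}}\boxtimes 1)$ --- which comes, by uniqueness of adjoints, from the analogous transitivity of $\operatorname{Res}$ --- one embeds $\Ind{k}{t}(B\boxtimes X)$ into $\operatorname{Ind}_{S_{k+\ell}\times S_{t-k-\ell}}^{S_t}(C\boxtimes\triv)$ with $C=\operatorname{Ind}_{S_k\times S_\ell}^{S_{k+\ell}}(B\boxtimes D)$, and the second characterization bounds the \bigness{} by $k+\ell$.

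The lower bounds are the crux, and I expect the real difficulty there: a purely formal argument from \autoref{level-equiv} cannot work, since tensoring with a (self-)dual object can only be controlled as \emph{lowering} \bigness{} by at most $\ell$, so the ``\bigness{} of a summand is at most the \bigness{} of the whole'' estimates give merely $\ge k-\ell$. What must be shown is that $X\otimes Y$ genuinely has a summand of \bigness{} exactly $k+\ell$ --- equivalently, that the top-degree behaviour of tensor products (and of induction) in $\rep{t}$ is governed by the ordinary Littlewood--Richardson rule. I would organize this around the functor $\Ind{m}{t}(-\boxtimes\triv)\colon\Rep{m}\to\rep{t}$, using two ``top-degree'' facts: the indecomposable summands of $\Ind{m}{t}(\rho\boxtimes\triv)$ of \bigness{} $m$ are copies of $L(\rho)$ and nothing else; and $\Res{k}{t}L(\nu)$ equals $\triv\boxtimes L(\nu)$ plus summands whose $\rep{t-k}$-factor has strictly smaller \bigness{}. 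Granting these, the projection formula $\Ind{k}{t}(\lambda\boxtimes\triv)\otimes L(\nu)\cong\Ind{k}{t}\!\bigl((\lambda\boxtimes\triv)\otimes\Res{k}{t}L(\nu)\bigr)$ gives $\Ind{k}{t}(\lambda\boxtimes\triv)\otimes L(\nu)\cong\Ind{k}{t}(\lambda\boxtimes L(\nu))\oplus Z'$ with $Z'$ of \bigness{} $<k+\ell$, while the first fact gives $\Ind{k}{t}(\lambda\boxtimes\triv)\cong L(\lambda)^{\oplus a}\oplus Z''$ with $Z''$ of \bigness{} $<k$; combining these makes the two lower-bound claims equivalent. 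Finally, transitivity of induction applied to the inclusion $L(\nu)\hookrightarrow\Ind{\ell}{t-k}(\nu\boxtimes\triv)$ in $\rep{t-k}$ shows that the \bigness{}-$(k+\ell)$ part of $\operatorname{Ind}_{S_{k+\ell}\times S_{t-k-\ell}}^{S_t}\!\bigl(\operatorname{Ind}_{S_k\times S_\ell}^{S_{k+\ell}}(\lambda\boxtimes\nu)\boxtimes\triv\bigr)$ --- which by the first fact is $\bigoplus_{\gamma\vdash k+\ell}L(\gamma)^{\oplus c^{\gamma}_{\lambda\nu}}$, the $c^{\gamma}_{\lambda\nu}$ being Littlewood--Richardson coefficients --- is contained in $\Ind{k}{t}(\lambda\boxtimes L(\nu))$; and $\sum_{\gamma}c^{\gamma}_{\lambda\nu}s_{\gamma}=s_{\lambda}s_{\nu}$ is a nonzero homogeneous symmetric function of degree $k+\ell$, so some $c^{\gamma}_{\lambda\nu}>0$. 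The only non-formal step is thus establishing the two top-degree statements about $\Ind{m}{t}(-\boxtimes\triv)$ and $\Res{k}{t}$, which I would prove by a Mackey or character computation in $\rep{t}$, or by transporting them from $\Rep{S_n}$ for $n\gg 0$ --- where they are the classical branching statements about horizontal strips --- using the facts collected in \autoref{app}.
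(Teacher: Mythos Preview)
The paper gives no proof: the corollary is stated immediately after \autoref{level-equiv} with only ``From the above, we obtain the following.'' In the paper's applications, what is really used is the upper bound (so that tensor and induction land in $\rep{*}^{(k+\ell)}$), together with the special case $X=\triv$ of the lower bound for induction, invoked implicitly when it is argued that $\ind C$ has \bigness{} greater than $k$ for $C\in\Rep{k'}$ with $k'>k$.

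Your upper-bound arguments --- embedding into $\h^{\otimes(k+\ell)}$ via $\h^{\otimes k}\otimes\h^{\otimes\ell}$, and transitivity of induction --- are exactly the one-line deductions the paper has in mind, and they already suffice for everything the paper actually needs. Your diagnosis that the \emph{lower} bound does not follow formally from \autoref{level-equiv} is correct, and the route you propose (projection formula to make the two lower bounds equivalent; then transitivity of induction together with the fact that the \bigness{}-$m$ part of $\Ind{m}{t}(\rho\boxtimes\triv)$ is a single copy of $L(\rho)$, so that the top layer is governed by Littlewood--Richardson and is nonzero) is a valid and natural way to establish the full equality. The two ``top-degree'' facts you isolate are standard consequences of the Pieri-type branching rules in \cite{del,cos}, and your suggestion to transport them from $\Rep{n}$ for $n\gg0$ via \autoref{proj-equiv} works. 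One small caveat: your appeal to semisimplicity of the source to get that $\Ind{k}{t}(B\boxtimes-)$ preserves monomorphisms tacitly assumes $t-k\notin\mathbb Z_{\ge0}$; since the paper ultimately cares only about transcendental $t$ this is harmless, but if you want the statement at all $t$ you should instead invoke exactness of induction directly. In short, your argument is correct and substantially more complete than the paper's, which really only gestures at the upper-bound half.
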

We write $\rep{t}^{(k)}$ for the full subcategory of $\rep{t}$ consisting of objects of level $\leq k$. There are then functors $-\otimes- : \rep{t}^{(k)}\boxtimes\rep{t}^{(\ell)}\to\rep{t}^{(k+\ell)}$ and $\Ind{k}{t} : \Rep{k}\boxtimes\rep{t-k}^{(\ell)}\to\rep{t}^{(k+\ell)}$ which are restricted and corestricted versions of the tensor product and induction.

The notion of \bigness{} allows us to state precisely the relation between $\rep{n}$ and $\Rep{n}$.
\begin{thm}\label{proj-equiv}
There is a $\C$-linear monoidal functor $\rep{n} \to \Rep{n}$ which is an additive equivalence onto its image when restricted to $\rep{n}^{(k)}$ when $n > 2k$.
\end{thm}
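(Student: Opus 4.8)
The plan is to build $\mathcal S\colon\rep{n}\to\Rep{n}$ from Deligne's universal property and then reduce its full faithfulness on $\rep{n}^{(k)}$ to a finite computation with partitions. On $\Rep{n}$, the permutation representation $\C^n$ --- thought of as the algebra of functions $[n]\to\C$, with pointwise product, with unit the constant function $1$, and with Frobenius form given by summation over $[n]$ (so that the indicator functions form a self-dual basis) --- is a commutative Frobenius algebra of categorical dimension $n$. By the universal property of $\rep{n}$ it determines a $\C$-linear monoidal functor $\mathcal S\colon\rep{n}\to\Rep{n}$ with $\mathcal S(\h)=\C^n$. This is the asserted functor, and it remains to prove $\mathcal S|_{\rep{n}^{(k)}}$ is fully faithful when $n>2k$.

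\emph{The functor on partitions; fullness.} Identify $\mathcal S(\h^{\otimes a})$ with the permutation representation $\C[[n]^a]$ and its standard basis $\{\delta_v\}_{v\in[n]^a}$. Unwinding $\mathcal S$ from the Frobenius structure, a partition $\pi\in\parti{a}{b}$ is sent to the map $T_\pi\colon\C[[n]^a]\to\C[[n]^b]$ equal to the sum, over all pairs $(g,h)\in[n]^a\times[n]^b$ whose combined function $a+b\to[n]$ is constant on every block of $\pi$, of the matrix unit $\delta_g\mapsto\delta_h$. On the other side, $\Hom_{\Rep{n}}(\C[[n]^a],\C[[n]^b])=\Hom_{S_n}(\C[[n]^a],\C[[n]^b])$ has a basis of orbit sums $R_\sigma$ indexed by partitions $\sigma$ of $a+b$ with at most $n$ blocks, where $R_\sigma$ collects the matrix units attached to pairs whose combined function has fiber partition exactly $\sigma$. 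Grouping the terms of $T_\pi$ by fiber partition yields the unitriangular identity
\[
T_\pi=\sum_{\sigma\,\ge\,\pi}R_\sigma ,
\]
the sum over coarsenings $\sigma$ of $\pi$, with the convention that a $\sigma$ having more than $n$ blocks contributes nothing; in particular $T_\pi=R_\pi+(\text{strictly coarser terms})$ once $\pi$ has at most $n$ blocks. Inverting this triangular system (downward induction on the number of blocks) shows every $R_\sigma$ lies in the span of the $T_\pi$, so $\mathcal S$ is surjective on every morphism space; hence $\mathcal S$ is full.

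\emph{Faithfulness.} Since $\h^{\otimes k}=([k],\mathrm{id})$, we have $\End_{\rep{n}}(\h^{\otimes k})=\C[\parti{k}{k}]$, the partition algebra $P_k(n)$, whereas $\End_{\Rep{n}}((\C^n)^{\otimes k})$ has the orbit-sum basis $\{R_\sigma\}$ with $\sigma$ running over partitions of a $2k$-element set having at most $n$ blocks. When $n\ge 2k$ this last constraint is vacuous --- a partition of a $2k$-element set has at most $2k\le n$ blocks --- so the two algebras have equal dimension, and the surjection $\mathcal S$ between them is therefore an isomorphism. To upgrade this from the single object $\h^{\otimes k}$ to all of $\rep{n}^{(k)}$, I would use that $\rep{n}^{(k)}$ is the idempotent completion of the additive category generated by $\h^{\otimes k}$: it is closed under direct summands (Krull--Schmidt), and every indecomposable $L(\lambda)$ of level $j\le k$ occurs as a summand of $\h^{\otimes k}$ --- it embeds into $\h^{\otimes j}$ by \autoref{level-equiv}, this embedding splits because $\rep{n}^{(k)}$ is semisimple in the range $n>2k$, and $\h^{\otimes j}$ is itself a split summand of $\h^{\otimes k}$ since $\triv$ is a summand of $\h$ (as $n\ne 0$). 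Writing the objects of $\rep{n}^{(k)}$ as $e\,\h^{\otimes k}$ for idempotents $e$, the Hom spaces on both sides become the matching corners $f\,\End(\h^{\otimes k})\,e$ of the endomorphism algebras, on which $\mathcal S$ is an isomorphism; hence $\mathcal S|_{\rep{n}^{(k)}}$ is fully faithful, so an additive equivalence onto its essential image.

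\emph{The main obstacle.} The nonformal ingredients are the triangular identity $T_\pi=\sum_{\sigma\ge\pi}R_\sigma$, from which faithfulness drops out by a dimension count at $n\ge 2k$, and --- the more delicate one --- the identification of $\rep{n}^{(k)}$ with the idempotent completion of $\langle\h^{\otimes k}\rangle$, i.e. the fact that every indecomposable of level $\le k$ already appears as a direct summand of $\h^{\otimes k}$ (together with the attendant semisimplicity of $\rep{n}^{(k)}$ for $n$ of this size). This is the step that genuinely uses the classification of indecomposables, and the block structure, of $\rep{n}$ rather than just the universal property, and it is what forces the hypothesis $n>2k$.
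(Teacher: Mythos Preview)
Your argument is correct and takes a genuinely different route from the paper. The paper simply cites \cite[Proposition~3.25]{cos}: that result already supplies a full functor $\rep{n}\to\Rep{n}$ together with the explicit description of where each $L(\lambda)$ lands (namely, the irreducible labeled by $(n-|\lambda|,\lambda_1,\lambda_2,\dots)$ when this is a partition, and $0$ otherwise). The paper then just checks that for $n>2k$ and $|\lambda|\le k$ one has $n-|\lambda|\ge|\lambda|\ge\lambda_1$, so no indecomposable in $\rep{n}^{(k)}$ is killed and distinct indecomposables go to distinct irreducibles; the equivalence onto the image is read off from this. Your approach instead proves fullness and faithfulness by hand: you exhibit the unitriangular relation $T_\pi=\sum_{\sigma\ge\pi}R_\sigma$ to get surjectivity on hom-spaces, and then a dimension count on $\End(\h^{\otimes k})$ (both sides have dimension $|\parti{k}{k}|$ once $n\ge 2k$) gives the isomorphism there, which you propagate to all of $\rep{n}^{(k)}$ by realizing it as the Karoubi envelope of $\h^{\otimes k}$. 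This buys a more self-contained argument that makes the role of the bound $n>2k$ transparent at the level of the partition algebra; the paper's version is shorter because it outsources the work to \cite{cos}.

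One small simplification is available in your last step. You invoke semisimplicity of $\rep{n}^{(k)}$ (via the block structure) to split the monomorphism $L(\lambda)\hookrightarrow\h^{\otimes j}$ supplied by \autoref{level-equiv}. This is unnecessary: in the Comes--Ostrik construction, $L(\lambda)$ is \emph{defined} as the image of a primitive idempotent in $\End(\h^{\otimes|\lambda|})$, so it is a direct summand of $\h^{\otimes|\lambda|}$ from the outset. Combined with your observation that $\h^{\otimes j}$ is a summand of $\h^{\otimes k}$ (since $\triv$ splits off $\h$ for $n\ne 0$), this already shows $\rep{n}^{(k)}$ is the idempotent completion of $\langle\h^{\otimes k}\rangle$, with no appeal to the block description. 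With that tweak, your proof uses strictly less external input than the paper's.
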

\begin{proof}
By \cite[Proposition 3.25]{cos}, there is a functor $\rep{n}\to\Rep{n}$ which is full. This functor sends $L(\lambda)$ to $0$ only if $n-|\lambda|<\lambda_1$. ($\lambda$ is a sequence of natural numbers, and $\lambda_1$ is the first number in the sequence.) If $n-|\lambda|\geq\lambda_1$, then $L(\lambda)$ is sent to the irreducible representation of $S_n$ labeled by the partition $(n-|\lambda|, \lambda_1,\lambda_2,\ldots)$. If $n\geq2k$, all indecomposables in $\rep{n}^{(k)}$ have $k\geq|\lambda|$, so 
$n\geq 2 k \geq 2 |\lambda| $
and $n-|\lambda|\geq |\lambda|\geq \lambda_1$. So no indecomposables in $\rep{n}^{(k)}$ are sent to zero; in fact, by the above, they are all sent to distinct irreducibles in $\Rep{n}$. 
\end{proof}

\section{Categories Over $\A$}\label{cats-over}
Mathew \cite{mat} introduced the framework of categories parameterized by schemes, and showed that the categories $\rep{t}$ fit into this framework. That is, there is a category $\rep{*}$ internal to the category of schemes over $\mathbb{A}^1_{\mathbb{C}}$ such that the fiber over $t$ is equivalent to $\rep{t}$. It consists of an object scheme $\ob{}$, a morphism scheme $\mor$, and certain maps of schemes over $\A$ expressing domain, codomain, composition, tensor, etc.  The theorems and constructions of Deligne, and Ostrik and Comes, discussed above can be reformulated in this language. Since this is not the focus of our paper, we will use the language of Mathew without reproving the results formulated in the other language. If $\mathfrak C$ is a category over $\A$, we write $\mathcal R\mathfrak C$ for the category of global sections. If $X$ is a point in the object scheme of a category over $\A$, we call the element of $\A$ whose fiber $X$ occupies the \emph{rank} of $X$.

\begin{defn}
Following the notation of \cite{mat}, for all schemes $S$, there is a functor $\Alg$ from the category $\mon{S}$ of monoidal Ab-enriched categories over $S$ to the category $\cat{S}$ of categories over $S$ that sends $\mathfrak C$ to the category of algebras internal to $\mathfrak C$.
\end{defn} 
For example, if $M$ is the morphism scheme of $\mathfrak C$, the object scheme of $\Alg \mathfrak C$ is a certain subscheme of $M\times_S M$ which is the intersection (pullback) of the equalizers of several pairs of morphisms, some of which express that the domains and codomains are correct, others of which specify associativity and the other algebra laws.
\begin{defn}
Let $\Alg\rep{*}^{(k)}$ denote the pullback of \[\Alg\rep{*}\rightarrow\rep{*}\leftarrow\rep{*}^{(k)},\] where the first map is the evident underlying-object functor $\Alg\mathfrak C\to\mathfrak C$ and the second map is the inclusion. 
\end{defn}
Said another way, $\Alg\rep{*}^{(k)}$ is the category of algebras whose underlying object has level $\leq k$.

An object in $\Alg\rep{*}^{(k)}$ which is simple is also simple in $\Alg\rep{*}$, since quotient objects have \bigness{} less than or equal to the original. Conversely, a simple object in $\Alg\rep{*}$ of level $k$ is still simple when considered as an object of $\Alg\rep{*}^{(k)}$, since a nontrivial quotient of it in $\Alg\rep{*}^{(k)}$ would also exist in $\Alg\rep{*}$. Therefore, we can speak unambiguously of an algebra being simple without worrying about the category that it is considered as an object of.

In the algebro-geometric framework, $\Ind{k}{*}$ is a functor $\Rep{k}\boxtimes\rep{*-k}^{(l)}\to\rep{*}^{(l+k)}$. Here $\rep{*-k}^{(l)}$ is  $\rep{*}^{(l)}$, but with the maps $r : \ob\rep{*}^{(l)}\to\A$ resp.~$r' : \mor\rep{*}^{(l)}\to\A$ replaced by $\tau_{k}r$ resp.~$\tau_{k}r'$, where $\tau_{k}:\A\to\A$ is translation by $k$. 
However, induction is not everywhere defined; it has poles at small integers. We can at least say, though, that for each Zariski-open $U\subseteq \A$ and each section of $\Rep{k}\boxtimes\rep{*-k}^{(l)}$ defined on $U$, there is a Zariski-open $V\subseteq U$ which contains all but at most finitely many points of $U$ and a section of $\rep{*}^{(l+k)}$ defined on $V$. We will only consider the case when $U = \A$, and the sections of $\Rep{k}\boxtimes\rep{*-k}^{(l)}$ will take the form $A \boxtimes \triv$.

Induction canonically carries a lax monoidal structure. This means that when $A$ is an algebra in $\Rep{k}$, the trivial algebra structure on $\triv$ means that $A\boxtimes \triv$ is an algebra in $\mathcal R (\Rep{k}\boxtimes\rep{*-k}^{(l)})$. This lets us make the following definition.
\begin{defn}
If $A$ is an object in $\Rep{k}$, we define $\ind{A}$ to be the section $\Ind{k}{*}(A\boxtimes\triv)$ of $ \rep{*}^{(k)}$. If $A$ is an algebra in $\Rep{k}$, we define $\ind{A}$ to be $\Ind{k}{*}(A\boxtimes \triv)$, together with the induced algebra structure, considered as an section of $\Alg\rep{*}^{(k)}$. We also write $U_A$ for the (Zariski-open, cofinite) subset of $\A$ on which $\ind{A}$ is defined.
\end{defn}

\section{Constructibility}\label{cons}

\subsection{Basics}

Our main tool for proving things about the transcendental case is using constructibility arguments.
\begin{lem}\label{con-lem}
If $V$ is a scheme, and $r$ a map from $V$ to the complex line, and $S$ a constructible subset of $V$, then $r(S)$ is either finite or cofinite.
\end{lem}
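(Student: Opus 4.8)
The plan is to combine Chevalley's theorem on images of constructible sets with the elementary fact that the complex line $\A$ is one-dimensional, which forces its constructible subsets to be either finite or cofinite.

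First I would reduce the statement to the assertion that $r(S)$ is a \emph{constructible} subset of $\A$. In all the situations where we apply this lemma, $V$ is of finite type over $\C$ (it appears as a locally closed subscheme of an affine space over $\A$), so $r$ is a morphism of finite type between Noetherian schemes; giving $S$ its reduced induced subscheme structure, the composite $S \to \A$ is again of finite type, and Chevalley's theorem then guarantees that the image $r(S)$ is constructible.

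Next I would inspect the constructible subsets of $\A = \Spec \C[x]$. Since $\C[x]$ is a principal ideal domain, the only closed subsets of $\A$ are $\A$ itself and the finite sets of closed points. Hence every nonempty open subset is cofinite, and every locally closed subset --- an open subset intersected with a closed subset --- is either a finite set of closed points or cofinite. As a constructible set is a finite union of locally closed sets, and a finite union of sets each of which is finite or cofinite is finite precisely when all the pieces are finite and cofinite otherwise, we conclude that $r(S)$ is finite or cofinite.

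The one place that needs care is the reduction step: for a genuinely arbitrary scheme $V$ the conclusion is false --- an infinite disjoint union of copies of $\Spec \C$ can surject onto a subset of $\C$ that is neither finite nor cofinite --- so it is essential to use the finite-type hypothesis, which holds in every application in this paper, in order to invoke Chevalley's theorem. Once constructibility of the image is established, the remaining dimension-one argument presents no further obstacle.
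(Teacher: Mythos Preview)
Your proposal is correct and follows essentially the same route as the paper: invoke Chevalley's theorem to get that $r(S)$ is constructible in $\A$, then observe that the constructible subsets of the affine line are exactly the finite and cofinite sets. Your remark that the finite-type hypothesis on $V$ is needed for Chevalley to apply is well taken---the paper's proof tacitly assumes this (as do all the applications later in the paper), and your counterexample with an infinite disjoint union of points shows the statement fails without it.
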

\begin{proof}
Chevalley's theorem on constructible sets means that the image $r(S)$ is a constructible subset of $\A$. However, constructible subsets are generated under intersection and complement by (Zariski-)closed subsets, which for $\A$ are just the finite subsets, so constructible subsets of the complex line are the finite or cofinite subsets.
\end{proof}
\begin{cor}\label{con-cor-lem}
With $V$, $r$, and $S$ as above, suppose further that for all $\theta\in\Gal(\mathbb{C}/\overline{\mathbb{Q}})$, there is a map $\Theta : V \to V$ with $\Theta(S)\subseteq S$ and $r\Theta=\theta r$. If $r(S)$ is infinite, it is cofinite and contains all transcendental numbers.
\end{cor}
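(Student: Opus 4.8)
The plan is to extract everything from \autoref{con-lem} together with the fact that the hypotheses make $r(S)$ invariant under the action of $\Gal(\C/\overline{\mathbb{Q}})$ on the closed points of $\A$, which we identify with $\C$. Since $S$ is constructible, \autoref{con-lem} says $r(S)$ is finite or cofinite; as it is assumed infinite, it is cofinite, so $C := \C\setminus r(S)$ is a finite set. The whole content of the corollary is then the claim that $C$ contains no transcendental number.

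First I would record the Galois-invariance. Fix $\theta\in\Gal(\C/\overline{\mathbb{Q}})$ and let $\Theta : V\to V$ be as in the hypothesis, so $\Theta(S)\subseteq S$ and $r\Theta=\theta r$. Evaluating the last equation on points of $S$ gives $\theta(r(S)) = r(\Theta(S))\subseteq r(S)$. Running the same argument with $\theta^{-1}$ in place of $\theta$ and combining shows $\theta(r(S)) = r(S)$, and hence also $\theta(C) = C$, for every $\theta\in\Gal(\C/\overline{\mathbb{Q}})$; that is, $C$ is a finite $\Gal(\C/\overline{\mathbb{Q}})$-stable subset of $\C$.

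Next I would use the standard fact that all transcendental complex numbers form a single orbit under $\Gal(\C/\overline{\mathbb{Q}})$: given transcendental $t,t'$, the assignment $t\mapsto t'$ extends to an isomorphism $\overline{\mathbb{Q}}(t)\cong\overline{\mathbb{Q}}(t')$ of fields over $\overline{\mathbb{Q}}$, then to an isomorphism of their algebraic closures inside $\C$, and finally, by extending along a transcendence basis, to an automorphism of $\C$ fixing $\overline{\mathbb{Q}}$. Thus the orbit of any transcendental number is the (infinite) set of all transcendental numbers. Since $C$ is finite and $\Gal(\C/\overline{\mathbb{Q}})$-stable, if it contained a transcendental number it would contain that number's entire infinite orbit, which is absurd; hence $C$ consists entirely of algebraic numbers, i.e. $r(S)$ contains every transcendental number. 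Together with cofiniteness from the first paragraph, this is exactly the assertion.

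I expect the only genuinely non-formal point to be the orbit statement invoked in the third paragraph; it is a routine Zorn's-lemma extension argument and uses nothing about constructibility, while the rest is bookkeeping with images of constructible sets and the equivariance hypothesis $r\Theta=\theta r$.
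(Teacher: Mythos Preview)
Your proof is correct and follows essentially the same route as the paper: cofiniteness from \autoref{con-lem}, then Galois transitivity on transcendentals. The only cosmetic difference is that you argue via the finite Galois-stable complement $C$ (needing the hypothesis for both $\theta$ and $\theta^{-1}$ to get full invariance), whereas the paper works directly: it picks one transcendental $s\in r(S)$ (which exists by cofiniteness), chooses $\theta$ with $\theta(s)=t$, and uses $r\Theta=\theta r$ once to exhibit $t\in r(S)$.
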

\begin{proof}
By \autoref{con-lem}, $r(S)$ is cofinite or finite; since it is not finite, it is cofinite. Since there are infinitely many transcendental numbers, at least one is in $r(S)$, say $s$; that is, there is $x\in V$ with $r(x)=s$. However, for all transcendental $t$ there is a Galois automorphism $\theta$ sending $s$ to $t$. Since $r(\Theta(x)) = \theta(r(x)) = \theta(s) = t$, it follows that $t \in r(S)$.
\end{proof}

We will usually apply this when $(V,r)$ is the object scheme-over-$\A$ of a category over $\A$, and $\Theta$ is the object part of a Galois functor.

\subsection{Constructible Properties}

We can mimic the proof of \cite[Proposition 3.4]{mat}, in the non-Ab-enriched case to show the following.

\begin{lem}\label{sim-const}
There is a constructible subset $Sim$ of $\ob\Alg\rep{*}^{(k)}$ whose closed points are simple algebras.
\end{lem}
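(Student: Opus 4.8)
The plan is to follow the strategy of \cite[Proposition 3.4]{mat}, adapting the argument to the setting of algebra objects rather than objects in an Ab-enriched category over $\A$. The key point is that simplicity of an algebra $A$ — meaning $A$ has no quotient algebras other than $A$ itself and (if $A \neq \triv$) possibly the zero algebra, or more precisely that $A$ is not decomposable as a nontrivial product and has no proper nonzero ideals — is a condition that can be checked by the vanishing or non-vanishing of finitely many families of matrix entries, and such conditions cut out constructible sets. Concretely, since $\rep{*}^{(k)}$ has, locally on $\A$, only finitely many indecomposable summands available (those $L(\lambda)$ with $|\lambda| \leq k$), an algebra $A$ of level $\leq k$ has an underlying object which is, Zariski-locally on the base, a fixed finite direct sum of such $L(\lambda)$'s. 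A quotient algebra corresponds to an idempotent-free quotient, equivalently to a decomposition of the underlying object together with compatible algebra data; the failure of such a decomposition to exist is expressed by the non-solvability of a system of polynomial equations in the structure constants of $A$, which by elimination (Chevalley) is a constructible condition on the point of $\ob\Alg\rep{*}^{(k)}$ parameterizing $A$.

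First I would reduce to a single Zariski-open piece $U \subseteq \A$ on which the underlying-object bundle of $\Alg\rep{*}^{(k)}$ is ``split'', i.e.\ on which every algebra of level $\leq k$ has underlying object a direct sum $\bigoplus_\lambda L(\lambda)^{\oplus m_\lambda}$ with the multiplicities $m_\lambda$ locally constant; covering $\A$ by finitely many such $U$'s and taking the union of the resulting constructible sets suffices, since a finite union of constructible sets is constructible. On such a $U$, the space of algebra structures on a fixed underlying object $M$ is a locally closed subscheme of an affine space (the multiplication map lies in $\Hom(M \otimes M, M)$, a finite free module over $\mathcal O_U$, cut out by the associativity, commutativity, and unit equations), and over this parameter scheme one can form the universal algebra. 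Next I would express non-simplicity: an algebra $A$ fails to be simple iff there exists a nonzero proper ideal, equivalently (since everything is rigid and semisimple fiberwise when $t \notin \mathbb Z_{\geq 0}$, and the bad integers are finite so can be absorbed into the finite-versus-cofinite dichotomy) iff there is a nontrivial idempotent in $A$, i.e.\ a section $e$ of the underlying object with $e \cdot e = e$, $e \neq 0$, $e \neq 1$. The locus of pairs $(A, e)$ with $e$ a nontrivial idempotent is closed (cut out by $e^2 = e$) intersected with the open conditions $e \neq 0$, $e \neq 1$, hence constructible; projecting away $e$ and applying Chevalley's theorem gives that the non-simple locus is constructible, so $Sim$, its complement in $\ob\Alg\rep{*}^{(k)}$ (intersected with the chart), is constructible as well.

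The main obstacle I anticipate is the bookkeeping needed to make ``the underlying-object bundle is split over $U$'' precise and to check that the resulting stratification of $\ob\Alg\rep{*}^{(k)}$ is by constructible pieces — this is exactly the technical content of \cite[Proposition 3.4]{mat}, which handles the analogous statement for objects, and the claim is that the proof goes through essentially verbatim once one replaces ``object'' by ``algebra object'' and notes that the algebra axioms are closed conditions on the (finitely many) structure constants. A secondary subtlety is the characterization of simplicity in terms of idempotents: this is clean at transcendental or generic $t$ where $\rep{t}$ is semisimple, but for the finitely many nonnegative integer ranks one must either argue directly that the non-simple locus is still constructible there or simply observe that, since we only ever apply \autoref{con-lem} and \autoref{con-cor-lem} afterward — where finitely many exceptional points are harmless — it does no harm to excise those fibers at the outset. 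I would take the latter route, restricting attention from the start to the open subscheme of $\ob\Alg\rep{*}^{(k)}$ lying over $\A \setminus \mathbb Z_{\geq 0}$.
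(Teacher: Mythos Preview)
Your overall strategy---parameterize witnesses to non-simplicity and apply Chevalley---is sound, but the specific witness you choose is wrong. You assert that, for $t\notin\mathbb Z_{\ge 0}$, an algebra $A$ in $\rep t$ fails to be simple iff it has a nontrivial idempotent element $e:\triv\to A$. This conflates semisimplicity of the \emph{category} $\rep t$ (every object splits as a sum of simple objects) with semisimplicity of $A$ as an internal ring; the former does not imply the latter. For a concrete counterexample already at level $0$, take $A=\triv\oplus\triv$ with the second summand squaring to zero, i.e.\ the dual numbers $\C[\epsilon]/(\epsilon^2)$ sitting inside $\rep t$. This algebra has the proper nonzero ideal generated by $\epsilon$, hence is not simple, yet it has no nontrivial idempotents. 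Your set $Sim$ would therefore contain non-simple algebras, and excising the integer fibers does not help since the example lives over every $t$.

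The paper sidesteps this by choosing the correct witness and working directly in the \emph{morphism} scheme of $\Alg\rep{*}^{(k)}$, with no stratification by underlying object and no structure-constant bookkeeping. It invokes \cite[Proposition~3.2]{mat} to obtain a constructible set $Mon\subset\mor$ of monomorphisms, lets $z:\ob\to\mor$ assign to each algebra its unique map to the terminal (zero) algebra, and uses the characterization: $A$ is simple iff every algebra morphism out of $A$ is either monic or the terminal map. Thus $Sim$ is the complement of the image, under the domain map, of $\mor\setminus\bigl(Mon\cup z(\ob)\bigr)$, which is constructible by Chevalley. The paper then checks that ``every map out is monic or terminal'' really coincides with ``no nontrivial quotient'' using \cite[Theorem~6.10]{cos}, handling the integer fibers as well. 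Your argument can be repaired by replacing ``nontrivial idempotent element'' with ``nontrivial quotient algebra'' (equivalently, a non-monic, non-terminal algebra map out of $A$), at which point it collapses to the paper's proof.
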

\begin{proof}
	Put $\ob = \ob\Alg\rep{*}^{(k)}$ and $\mor= \mor \Alg\rep{*}^{(k)}$. Let $z : \ob\to\mor$ be the map which sends an object to the map from that object to the terminal algebra. Since $\Alg\rep{*}^{(k)}$ is of finite type, by \cite[Proposition 3.2]{mat}, there is a constructible set $Mon\subset\mor$ which parameterizes monomorphisms, in that its intersection with each fiber consists of the actual set of monomorphisms in that category. Then the set $\mor-Mon-z(\ob)\subset\mor$ parameterizes morphisms which are not zero or monic. The complement of the image of this set under the domain map $\mor\to\ob$ parameterizes simple objects; we call it $Sim$. By Chevalley's theorem and the basic properties of constructible sets, all subsets just mentioned are constructible.

In the non-Ab-enriched case, we want a slightly stronger definition of simplicity than just ``every map out of it is monic or zero''; we want there to be no nontrivial quotients, up to isomorphism. However, in the semisimple case, the two are equivalent, and it follows from \cite[Theorem 6.10]{cos} that maps in $\rep{t}$ that are both monic and epic are isomorphisms, even in the case of integer $t$. Therefore the set $Sim$ constructed earlier is in fact the set of simple objects, in the stronger sense.
\end{proof}

\begin{lem}
There is a constructible subset $-\cong-$ of \[\ob\Alg\rep{*}^{(k)}\times_{\A}\ob\Alg\rep{*}^{(k)}\] whose closed points correspond to pairs of algebras which are isomorphic to each other.
\end{lem}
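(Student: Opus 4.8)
The plan is to follow the proof of \autoref{sim-const} closely: build the desired set as the image, under Chevalley's theorem, of a constructible --- in fact closed --- set of ``witnesses'' living in a fiber product of copies of the morphism scheme of $\Alg\rep{*}^{(k)}$. The natural witness to an isomorphism $A\cong B$ of algebras is a pair of algebra maps $f:A\to B$, $g:B\to A$ with $g\circ f=\mathrm{id}_A$ and $f\circ g=\mathrm{id}_B$, so the heart of the argument is the construction of a scheme parameterizing such pairs.

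In detail, write $\mor=\mor\Alg\rep{*}^{(k)}$ and $\ob=\ob\Alg\rep{*}^{(k)}$, with $d,c:\mor\to\ob$ the domain and codomain maps. First I would form the fiber product
\[
P=\mor\times_{\ob\times_\A\ob}\mor,
\]
where the first factor maps in by $f\mapsto(d(f),c(f))$ and the second by $g\mapsto(c(g),d(g))$; a point of $P$ over $t$ is then a pair $(f:A\to B,\ g:B\to A)$ of algebra maps in $\Alg\rep{t}^{(k)}$. Next, using that composition and the identity-assigning map are morphisms of schemes over $\A$ (part of the data of a category over $\A$), the equations $g\circ f=\mathrm{id}_A$ and $f\circ g=\mathrm{id}_B$ cut out a closed subscheme $W\subseteq P$ as the intersection of two equalizers. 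Since $\Alg\rep{*}^{(k)}$ is of finite type, $P$ and hence $W$ are of finite type over $\A$, so Chevalley's theorem applies to the projection $W\to\ob\times_\A\ob$ sending $(f,g)$ to $(d(f),c(f))$; I would take its image to be the set $-\cong-$. Finally I would check the closed points: a closed point $(A,B)$ lies in $-\cong-$ if and only if there is an algebra map $A\to B$ with a two-sided inverse in the ambient fiber, i.e.\ if and only if $A$ and $B$ are isomorphic as algebras.

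I do not anticipate a real obstacle; this is the same bookkeeping as \autoref{sim-const}. The one point needing care is that ``is an isomorphism'' be captured schematically, which is why I would phrase it as ``admits a two-sided inverse'' --- an existential condition, hence exactly what Chevalley turns into a constructible set --- rather than trying to single out isomorphisms among all morphisms directly. An alternative closer in spirit to \autoref{sim-const} is also available: take the constructible set $Mon$ of monomorphisms from \cite[Proposition 3.2]{mat}, intersect it with the analogous constructible set of epimorphisms (obtained by running the same argument on the opposite category over $\A$, which has the same finite-type morphism scheme), and invoke \cite[Theorem 6.10]{cos} to the effect that a monic epic algebra map is an isomorphism; the image of that intersection under $(d,c)$ is then the required set. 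Either way, the residual subtlety is merely confirming the finite-type and structure-map hypotheses of Chevalley's theorem, which hold here.
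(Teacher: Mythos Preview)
Your argument is correct. The paper itself gives no proof at all here; it simply cites \cite[Lemma~6.4]{mat}. Your main construction---parameterizing pairs $(f,g)$ of mutually inverse algebra maps as a closed subscheme of a fiber product of morphism schemes and projecting by Chevalley---is the standard way to exhibit the isomorphism relation as constructible, and is presumably what the cited lemma does. The only remark I would add is that your alternative route via $Mon\cap Epi$ requires a little more care than you indicate: one needs that a monic, epic \emph{algebra} morphism is an isomorphism, and monic/epic in $\Alg\rep{t}^{(k)}$ is not a priori the same as monic/epic of the underlying objects in $\rep{t}^{(k)}$, so the appeal to \cite[Theorem~6.10]{cos} is not quite immediate. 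Your primary argument avoids this entirely, so I would lead with it and drop the alternative.
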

\begin{proof}
See \cite[Lemma 6.4]{mat}.
\end{proof}

\begin{lem}
There is a constructible subset $-\hookrightarrow-$ of \[\ob\rep{*}^{(k)}\times_{\A}\ob\rep{*}^{(k)}\] whose closed points correspond to pairs of algebras $(A,B)$ for which there exists an injection from $A$ to $B$.
\end{lem}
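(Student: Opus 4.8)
The plan is to obtain $-\hookrightarrow-$ as the image of a constructible set under a morphism of schemes over $\A$, in exactly the same spirit as the proof of \autoref{sim-const} and of the isomorphism locus just above. The relevant constructible set will be the locus of monomorphisms inside the morphism scheme $\mor\rep{*}^{(k)}$, and the morphism will be the one recording the domain and the codomain of a map.

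First I would note that $\rep{*}^{(k)}$ is a category over $\A$ of finite type --- this is part of Mathew's setup and was already invoked in the proof of \autoref{sim-const} --- so that \cite[Proposition 3.2]{mat} produces a constructible subset $Mon\subseteq\mor\rep{*}^{(k)}$ whose intersection with the fiber over each $\C$-point $t$ is precisely the set of monomorphisms of $\rep{t}^{(k)}$. The category-over-$\A$ structure supplies domain and codomain maps $\mor\rep{*}^{(k)}\to\ob\rep{*}^{(k)}$ over $\A$, which combine into one map
\[
(d,c)\colon\mor\rep{*}^{(k)}\longrightarrow\ob\rep{*}^{(k)}\times_{\A}\ob\rep{*}^{(k)}.
\]
I would then define $-\hookrightarrow-$ to be $(d,c)(Mon)$, which is constructible since Chevalley's theorem says images of constructible sets under morphisms of finite type are constructible. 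For the identification of closed points: a closed point of $(d,c)(Mon)$ lies over some $t\in\A(\C)$, and its preimage in $Mon$ is a nonempty constructible subset of the finite-type $\C$-scheme $\mor\rep{t}^{(k)}$, hence contains a closed point, i.e.\ an honest monomorphism $A\hookrightarrow B$ in $\rep{t}^{(k)}$; conversely, any such monomorphism visibly gives a closed point of the image. Since a subobject of an object of level $\leq k$ is again of level $\leq k$ (by the characterization of level in \autoref{level-equiv}), monomorphisms in $\rep{t}^{(k)}$ between objects of level $\leq k$ coincide with monomorphisms in $\rep{t}$ in the semisimple case, so ``injection'' is unambiguous; for integer $t$ one argues exactly as in the last paragraph of the proof of \autoref{sim-const}, using \cite[Theorem 6.10]{cos}.

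I expect the only real difficulty to be bookkeeping. One must ensure the constructibility of $Mon$ is genuinely fiberwise --- so that the fiber of $(d,c)(Mon)$ over $t$ really is the image of the fiber of $Mon$ over $t$, which is precisely what \cite[Proposition 3.2]{mat} provides --- and one must carefully (but routinely) compare monomorphisms in $\rep{t}^{(k)}$ with those in $\rep{t}$. Everything else is a direct transcription of arguments already present in this section.
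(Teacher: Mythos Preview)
Your proposal is correct and is essentially identical to the paper's proof: take the constructible set $Mon\subseteq\mor\rep{*}^{(k)}$ from \cite[Proposition~3.2]{mat} and push it forward along $(\mathrm{dom},\mathrm{cod})$ to $\ob\rep{*}^{(k)}\times_{\A}\ob\rep{*}^{(k)}$, invoking Chevalley. The paper is considerably terser and does not spell out the closed-point identification or the comparison of monomorphisms in $\rep{t}^{(k)}$ versus $\rep{t}$, but your added remarks are accurate and the approach is the same.
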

\begin{proof} By \cite[Proposition 3.2]{mat}, the set of monomorphisms $Mon$ is constructible; projecting via the map \[\xymatrix{\mor\rep{t}^{(k)}\ar[rr]_-{(\mathrm{dom},\mathrm{cod})}& &\ob\rep{t}^{(k)}\times_{\A}\ob\rep{t}^{(k)}}\] gives the subset $-\hookrightarrow-$ desired.
\end{proof}

\subsection{Substitution}

We formulate a principle of substitution for constructible sets.

\begin{lem}
Let $\xymatrix@1{A \ar[r]_{a} & \A}$ and $\xymatrix@1{B \ar[r]_{b} & \A}$ be two schemes over $\A$. Let $X : U \to B$ be a section of $B$, i.e.~$b X$ is the inclusion $U\hookrightarrow \A$. Let $R$ be a constructible subset of $A\times_{\A} B$. Then there is a constructible subset of $A$ consisting of elements $x$ for which $a(x)$ is in $U$ and $(x, X(a(x)))$ is in $R$.
\end{lem}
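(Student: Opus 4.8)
The plan is to exhibit the desired set as the preimage of $R$ under an explicit morphism of schemes, so that constructibility follows from the standard facts that images and preimages of constructible sets along morphisms of finite type are again constructible.

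First I would pass to the subscheme $A' := a^{-1}(U)\subseteq A$. In the applications $U$ is a cofinite open subscheme of $\A$, so $A'$ is open in $A$; in general $A'$ is locally closed, so in any case $A'$ is constructible in $A$ and the inclusion $\iota : A'\hookrightarrow A$ is an immersion. On $A'$ the composite $a\iota$ factors through $U$, giving a map $a' : A'\to U$, and I set $X' := X\circ a' : A'\to B$. These assemble into a morphism $\gamma := (\iota, X') : A'\to A\times_{\A}B$: it is a well-defined map into the fibre product because the two composites to $\A$ agree, $a\iota = j a' = (bX)a' = bX'$, where $j : U\hookrightarrow\A$ is the inclusion and we use $bX = j$. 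On points, $\gamma(x) = \bigl(x,\, X(a(x))\bigr)$ for $x\in A'$.

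Now $\gamma^{-1}(R)\subseteq A'$ is constructible, since the preimage of a constructible set under a morphism of finite-type schemes over $\C$ is constructible; and its image $\iota\bigl(\gamma^{-1}(R)\bigr)\subseteq A$ is again constructible, since $\iota$ is an immersion onto the constructible subset $A'$ (equivalently, by Chevalley's theorem, as the image of a constructible set under a finite-type morphism). Unwinding the definitions, a point $x\in A$ lies in $\iota\bigl(\gamma^{-1}(R)\bigr)$ exactly when $x$ lifts to $A'$, i.e.\ $a(x)\in U$, and then $\gamma(x) = (x, X(a(x)))\in R$; so this image is precisely the set described in the statement.

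I do not anticipate a real obstacle: the argument is entirely formal. The two points that want a little care are checking that $\gamma$ genuinely lands in $A\times_{\A}B$ — the compatibility $a\iota = bX'$ computed above — and invoking the correct form of the principle ``constructibility is preserved by preimage and by image along finite-type morphisms'', which applies here because all the object and morphism schemes of our categories over $\A$ are of finite type.
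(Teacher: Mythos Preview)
Your argument is correct and is essentially the same as the paper's: your $A'=a^{-1}(U)$ is exactly the paper's $A_U=A\times_{\A}U$, and your map $\gamma=(\iota,X')$ is the composite $A_U\cong A_U\times_U U\xrightarrow{1\times_U X_U}A_U\times_U B_U\cong(A\times_{\A}B)_U\to A\times_{\A}B$ written via the universal property rather than the fibre-product decomposition. Both then take the preimage of $R$ under this map and push it forward along $A_U\to A$, invoking Chevalley.
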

\begin{proof}
Write $A_U = A\times_{\A}U$, etc. Then $(A\times_{\A}B)_U \cong A_U \times_U B_U$. The section $X$ can also be pulled back to a map $X_U : U \to B_U$. Let $R'$ be the inverse image of $R$ under the composite
\[\xymatrix{
A_U \cong A_U \times_U U \ar[r]_-{1\times_U X_U} & A_U \times_U B_U \cong (A\times_{\A}B)_U \ar[r] & A\times_{\A} B
}\]
and let $S$ be the image of $R'$ under the map $A_U \to A$. By Chevalley and the fact that inverse images of constructible sets are constructible, $S$ is constructible. It can be checked that $S$ is the subset desired.
\end{proof}
In the situation above, we call the subset $S$ the \emph{substitution} of $X$ into $R$. We also use notation that matches this; for example, if $R$ is the subset of $\ob\Alg\rep{*}^{(k)}\times_{\A}\ob\Alg\rep{*}^{(k)}$ consisting of pairs of isomorphic algebras, which we write as $-\cong-$, and $X$ is $\ind{A}$, we write the substitution of $X$ into $R$ as $-\cong \ind{A}$.
 
\begin{lem}
If $A$ is a simple algebra in $\Rep{k}$, then $\ind{A}(t)$ is a simple algebra in $\rep{t}$ for cofinitely many $t$, including all transcendental $t$.
\end{lem}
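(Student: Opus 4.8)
The plan is to combine the constructibility machinery just developed with the Galois-equivariance of induction. First I would observe that since $A$ is a simple algebra in $\Rep{k}$, the section $\ind{A}$ of $\Alg\rep{*}^{(k)}$ is defined on the cofinite Zariski-open set $U_A$. I would then form the subset $Sim$ of $\ob\Alg\rep{*}^{(k)}$ from \autoref{sim-const}, whose closed points are the simple algebras, and take its substitution along $\ind{A}$: more precisely, using the substitution principle with $R = Sim$ (viewed appropriately as a subset of a fiber product, or simply intersecting $Sim$ with the image of the section $\ind{A}$), I obtain a constructible subset $T \subseteq \A$ consisting exactly of those $t \in U_A$ for which $\ind{A}(t)$ is a simple algebra in $\rep{t}$. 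By \autoref{con-lem}, $T$ is either finite or cofinite.

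Next I would rule out the finite case by exhibiting at least one transcendental (equivalently, at least infinitely many) $t$ in $T$. For this I would invoke \autoref{proj-equiv}: for $n$ a sufficiently large integer (specifically $n > 2k$, and large enough that $\ind{A}(n)$ is defined, and large enough to accommodate the level of $\ind{A}(n)$), the projection functor $\rep{n}\to\Rep{n}$ is an additive equivalence onto its image on the relevant level-$\leq k$ subcategory, and by the compatibility of induction with projection it carries $\ind{A}(n)$ to $\C[S_n/H\times S_{n-k}]$ when $A = \C[S_k/H]$ — which is a simple (transitive) algebra in $\Rep{n}$. Since an additive monoidal equivalence onto its image preserves and reflects simplicity of algebra objects, $\ind{A}(n)$ is simple in $\rep{n}$. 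Hence $T$ contains infinitely many integers, so $T$ is cofinite rather than finite.

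Finally, to upgrade "cofinite" to "contains all transcendental $t$", I would apply \autoref{con-cor-lem}: the Galois functors $\theta_*$ give maps $\Theta$ on $\ob\Alg\rep{*}^{(k)}$ with $r\Theta = \theta r$, and by \autoref{comm-lem} together with the remark in the Background section that $\theta_*$ preserves simplicity of algebras, $\Theta$ carries $Sim$ into $Sim$ and in fact carries $\ind{A}(t)$ to (something isomorphic to) $\ind{A}(\theta(t))$ — here one uses $\theta_*\Ind{k}{t}(A\boxtimes\triv)\cong\Ind{k}{\theta(t)}(1\boxtimes\theta_*)(A\boxtimes\triv)\cong\Ind{k}{\theta(t)}(A\boxtimes\triv)$, since $A\in\Rep{k}$ is defined over $\overline{\mathbb Q}$ and $\theta_*$ fixes it. Thus $T$ is a Galois-stable constructible subset of $\A$ which is infinite, so by \autoref{con-cor-lem} it is cofinite and contains every transcendental number.

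The main obstacle I anticipate is bookkeeping rather than conceptual: making the substitution argument precise, i.e.\ checking that "$\ind{A}(t)$ is a simple algebra in $\rep{t}$" really is the trace on closed points of a constructible subset of $\A$ obtained by substituting the section $\ind{A}$ into the constructible set $Sim$ (one must be slightly careful that $\ind{A}$ is only a section over $U_A$, not all of $\A$, and that the "simple" constructible set lives in $\Alg\rep{*}^{(k)}$ for the correct $k$, namely the level of $\ind{A}$). The other point requiring care is verifying that the projection functor of \autoref{proj-equiv}, being an additive \emph{monoidal} equivalence onto its image, genuinely transports the algebra $\ind{A}(n)$ to the group-algebra object $\C[S_n/H\times S_{n-k}]$ and that the latter is simple in $\Rep{n}$ — both of which are addressed by the appendix results referenced as \autoref{app}.
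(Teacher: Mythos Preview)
Your proposal is correct and follows essentially the same route as the paper: substitute $\ind{A}$ into $Sim$ to get a constructible set $T\subseteq\A$, show $T$ contains all large integers via the projection $\rep{n}\to\Rep{n}$ and simplicity of $\Ind{k}{n}(A\boxtimes\triv)$ in $\Rep{n}$, then invoke \autoref{con-cor-lem} using the Galois equivariance of induction. One small quibble: in your last step you justify $(1\boxtimes\theta_*)(A\boxtimes\triv)\cong A\boxtimes\triv$ by saying ``$A$ is defined over $\overline{\mathbb Q}$ and $\theta_*$ fixes it,'' but the functor $1\boxtimes\theta_*$ does not act on the $\Rep{k}$ factor at all---the identity holds simply because $\theta_*(\triv)=\triv$.
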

\begin{proof}
The set of $t$ for which the statement holds is constructible, because it is the substitution of $\ind{A}$ into $Sim$. Hence if it is infinite, the desired conclusion follows by \autoref{con-cor-lem}. $V$ is $\ob\Alg\rep{*}^{(k)}$. The $\Theta$ which lifts a given automorphism $\theta$ is the object part of the extension of $\theta_*$ to algebras.

We will show that the intersection is infinite by showing that it contains all sufficiently large natural numbers. If $n>2k$, then $\ind{A}(n)\in\Alg\rep{n}$ corresponds to $\Ind{k}{n}(A\boxtimes\triv)\in\Rep{n}$, where $A$ becomes a representation of $S_k\times S_{n-k}$ by $S_{n-k}$ acting trivially. These are simple, by \autoref{ind-simp}. Simplicity in $\Rep{n}$ implies simplicity in $\rep{n}^{(k)}$ for sufficiently large $n$ because any nontrivial quotient that exists in $\rep{n}^{(k)}$ would have level not greater than the level of $\ind{A}(n)$, which is $k$, hence would not be killed by the projection to $\Rep{n}$. Hence for $n$ large enough, $\ind{A}(n)$ is also simple and thus belongs to  $Sim$. 
\end{proof}

In fact, $\ind{A}(t)$ is simple for all $t$ for which it is defined, for categorical reasons. However, the statement here is sufficient for our purposes, and the proof of the stronger statement would be an unnecessary detour.

\begin{lem}\label{ind-iso}
For all algebras $A$ in $\Rep{k}$, the subset of those $X\in\ob\Alg\rep{*}^{(k)}$ for which there exists an isomorphism from $X$ to $\ind{A}(t)$, for $t=$ rank of $X$, is constructible. \end{lem}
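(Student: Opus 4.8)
The plan is to recognize this set as an immediate instance of the principle of substitution for constructible sets formulated above. I would apply that lemma with both schemes-over-$\A$ taken to be $\ob\Alg\rep{*}^{(k)}$ (with its rank map to $\A$), with $R$ taken to be the constructible subset $-\cong-$ of $\ob\Alg\rep{*}^{(k)}\times_{\A}\ob\Alg\rep{*}^{(k)}$ parameterizing pairs of isomorphic algebras, and with the section $X$ taken to be $\ind{A}$, defined on the cofinite Zariski-open $U_A$. The substitution lemma then produces a constructible subset of $\ob\Alg\rep{*}^{(k)}$ --- namely the set denoted $-\cong\ind{A}$ in the paper's notation --- consisting of exactly those points $X$ whose rank $t$ lies in $U_A$ and for which $(X,\ind{A}(t))$ lies in $-\cong-$. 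Unwinding the definitions, this is precisely the set of $X$ admitting an isomorphism to $\ind{A}(\operatorname{rank} X)$, which is the set in the statement.

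Before invoking the lemma I would check its two hypotheses in this situation. The subset $-\cong-$ is constructible by the earlier lemma asserting exactly this. For $\ind{A}$ to be a legitimate section of the scheme $\ob\Alg\rep{*}^{(k)}$ (as opposed to merely of $\ob\Alg\rep{*}$), one uses \autoref{level-cor}: since $\triv$ has level $0$, the object $\Ind{k}{t}(A\boxtimes\triv)$ has level $k$, so $\ind{A}$ lands in the level-$\leq k$ part; and the discussion in \autoref{cats-over} already records that $\Ind{k}{*}(A\boxtimes\triv)$, together with the algebra structure induced via the lax monoidal structure on induction, is a section of $\Alg\rep{*}^{(k)}$ over the cofinite open $U_A$, hence a section of $\ob\Alg\rep{*}^{(k)}$ in the sense required by the substitution lemma.

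I do not anticipate any genuine obstacle here: the entire content is the bookkeeping that matches ``$t=$ rank of $X$'' with the clause ``$a(x)\in U$'' in the substitution lemma, and the observation that the substitution $-\cong\ind{A}$ literally names the set in the statement --- which is exactly how the paper's notation for substitution was arranged. The only mild subtlety is the implicit convention that $X$ ranges only over points whose rank lies in $U_A$, since otherwise $\ind{A}(\operatorname{rank} X)$ is not defined; this is handled automatically, because the substitution lemma builds the condition $a(x)\in U$ into the constructible set it produces.
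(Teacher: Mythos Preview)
Your proposal is correct and follows exactly the paper's approach: the paper's entire proof is the single sentence ``This is because it is the substitution of $\ind{A}$ into $-\cong-$,'' which is precisely the instance of the substitution lemma you spell out. Your additional verification that $\ind{A}$ is a section of $\ob\Alg\rep{*}^{(k)}$ and that $-\cong-$ is constructible is just making explicit what the paper leaves implicit.
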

\begin{proof}
This is because it is the substitution of $\ind{A}$ into $-\cong-$.
\end{proof}

\begin{lem}\label{ind-mon}
For all objects $B$ in $\Rep{k}$, the subset of those $A\in\ob\Alg\rep{*}^{(k)}$ for which there exists an injection of objects from $A$ to $\ind{B}(t)$, for $t=$ rank of $A$, is constructible. \end{lem}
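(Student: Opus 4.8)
The plan is to obtain this set as a substitution of $\ind{B}$ into a suitable constructible relation, just as in \autoref{ind-iso}, the only extra step being to pass from algebras to their underlying objects. First I would use that the evident underlying-object functor $\Alg\rep{*}^{(k)}\to\rep{*}^{(k)}$ is a morphism of categories over $\A$, so that it induces a morphism of schemes over $\A$, say $\pi : \ob\Alg\rep{*}^{(k)}\to\ob\rep{*}^{(k)}$, which on closed points sends an algebra to its underlying object.

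Next I would pull back the constructible subset $-\hookrightarrow-$ of $\ob\rep{*}^{(k)}\times_{\A}\ob\rep{*}^{(k)}$ along the map $\pi\times_{\A}\mathrm{id}$. Since inverse images of constructible sets under morphisms of schemes are constructible, this yields a constructible subset $R\subseteq\ob\Alg\rep{*}^{(k)}\times_{\A}\ob\rep{*}^{(k)}$ whose closed points are exactly the pairs $(A,Y)$, with $A$ an algebra and $Y$ an object of the same rank, such that there is an injection of objects from the underlying object of $A$ into $Y$.

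Finally, $\ind{B}$ is a section of $\rep{*}^{(k)}$ over the cofinite open $U_B$, hence in particular a section of the object scheme $\ob\rep{*}^{(k)}\to\A$. Substituting $\ind{B}$ into $R$ in the sense of the substitution lemma produces a constructible subset of $\ob\Alg\rep{*}^{(k)}$ whose closed points, by unwinding the definition of substitution, are precisely those algebras $A$ of rank $t$ with $t\in U_B$ for which there exists an injection of objects from $A$ to $\ind{B}(t)$. This is the required set; for ranks outside $U_B$ there is no object $\ind{B}(t)$, so the set is vacuously empty there, in agreement with the statement.

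I do not anticipate a genuine obstacle: all the real work is carried by Chevalley's theorem, the substitution lemma, and the constructibility of $-\hookrightarrow-$, all of which are already available. The only point requiring care is the bookkeeping — keeping the fibered products over $\A$ so that ``rank'' is matched on both sides, and not conflating the algebra $A$ with its underlying object $\pi(A)$ when applying $-\hookrightarrow-$.
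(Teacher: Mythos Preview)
Your proposal is correct and follows essentially the same approach as the paper: the paper's one-line proof describes the desired set as the inverse image under the underlying-object map $\ob\Alg\rep{*}^{(k)}\to\ob\rep{*}^{(k)}$ of the substitution of $\ind{B}$ into $-\hookrightarrow-$, which differs from your argument only in the (immaterial) order in which the pullback along $\pi$ and the substitution of $\ind{B}$ are performed.
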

\begin{proof}
\sloppy
This is because it is the inverse image under the underlying-object map $\ob\Alg\rep{*}^{(k)}\to\ob\rep{*}^{(k)}$ of the substitution of $\ind{B}$ into \mbox{$-\hookrightarrow-$}.
\end{proof}
\fussy

\section{Classification}\label{sec:main}
In fact, for $t$ transcendental, the only simple algebras in $\rep{t}$ come from algebras in $\Rep{k}$, for some $k$.

\begin{defn}
Suppose $k\in\mathbb N$ and $B \in \Rep{k}$. A number $t\in\C$ is called \emph{$(k,B)$-exotic} if there is a simple algebra $A$ in $\rep{t}$ such that 
\begin{itemize}
\item $A$ has \bigness{} less than or equal to $k$,
\item there is an injection of objects $A\hookrightarrow\ind{B}(t)$, and
\item  $A$ is \emph{not} isomorphic as an algebra to $\ind{C}(t)$, for any $k'$ and simple algebra $C\in\Rep{k'}$.
\end{itemize}
If $t$ is not $(k,B)$-exotic, it is called \emph{$(k,B)$-ordinary}.
\end{defn}

\begin{lem}
For a given $k$ and $B$, the set of $(k,B)$-exotic numbers, and hence also the set of $(k,B)$-ordinary numbers, is constructible.
\end{lem}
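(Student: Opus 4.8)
The plan is to exhibit the set of $(k,B)$-exotic numbers as the image, under the rank map $r:\ob\Alg\rep{*}^{(k)}\to\A$, of a constructible subset $E$ of $\ob\Alg\rep{*}^{(k)}$; since $\Alg\rep{*}^{(k)}$ is of finite type, Chevalley's theorem then gives that this image is constructible, and the set of $(k,B)$-ordinary numbers is constructible as its complement. The subset $E$ should consist of those points $X$ which are simple, which (writing $t$ for the rank of $X$) admit an injection of objects $X\hookrightarrow\ind{B}(t)$, and which are not isomorphic as algebras to $\ind{C}(t)$ for any $k'$ and any simple algebra $C\in\Rep{k'}$. A direct check shows $r(E)$ is precisely the set of $(k,B)$-exotic numbers: a point of $E$ lying over $t$ is by construction a witness that $t$ is $(k,B)$-exotic, and conversely a witnessing simple algebra $A$ in $\rep{t}$ has level $\le k$, so it corresponds to a point of $\ob\Alg\rep{*}^{(k)}$, which then lies in $E$.

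The first two conditions defining $E$ are already known to be constructible: simplicity is handled by the set $Sim$ of \autoref{sim-const}, and the existence of an injection of objects into $\ind{B}$ by \autoref{ind-mon}. The third condition is the one requiring an idea, since a priori it is an infinite conjunction over $k'$. The key point is that this quantifier can be made finite: if $X$ has level $\le k$ and $X\cong\ind{C}(t)$ as algebras with $C\in\Rep{k'}$, then by \autoref{level-cor} (applied to the object $\triv$, of level $0$) the algebra $\ind{C}(t)=\Ind{k'}{t}(C\boxtimes\triv)$ has level exactly $k'$, forcing $k'\le k$. Hence, on $\ob\Alg\rep{*}^{(k)}$, the third condition is equivalent to requiring $X\not\cong\ind{C}(t)$ only for $C$ ranging over simple algebras of $\Rep{k'}$ with $k'\le k$. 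By \autoref{app} these are the algebras $\C[S_{k'}/H]$ for $H$ a subgroup of $S_{k'}$, and since each $S_{k'}$ has only finitely many subgroups, only finitely many such $C$ occur. For each of them, \autoref{ind-iso} supplies a constructible set of points $X$ isomorphic to $\ind{C}(t)$; the third condition then cuts out the complement of the union of these finitely many constructible sets.

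Assembling these, $E$ is an intersection of finitely many constructible subsets of $\ob\Alg\rep{*}^{(k)}$, hence constructible, and the statement follows. The main obstacle is the third condition: the crucial observation is that the level bound forces $k'\le k$, collapsing the infinite quantifier to a finite one so that the per-algebra result \autoref{ind-iso} can be combined over the finitely many relevant $C$. Everything else is a routine combination of Chevalley's theorem with the constructibility results of the previous section.
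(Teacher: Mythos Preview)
Your argument is correct and follows essentially the same route as the paper: describe the $(k,B)$-exotic locus as the image under the rank map of a constructible set in $\ob\Alg\rep{*}^{(k)}$ cut out by $Sim$, the injection-into-$\ind{B}$ condition, and the finitely many non-isomorphism conditions, with the key reduction $k'\le k$ coming from the level bound on $\ind{C}(t)$. The only cosmetic difference is that the paper cites \autoref{actual-classification} directly for the finiteness of simple algebras in each $\Rep{k'}$, rather than the ambient section.
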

\begin{proof}
The set of $(k,B)$-exotic numbers is the projection from $\Alg\rep{t}^{(k)}$ to $\C$ of the subset consisting of algebras which are simple, inject into $\ind{B}(t)$, and not isomorphic to $\ind{A}(t)$ for any $k'\leq k$ and $A\in\Rep{k'}$. It suffices to consider $k'\leq k$ because algebras induced from $\Rep{k'}$ for $k'>k$ will have \bigness{} greater than $k$. Since there are finitely many such $A$, due to \autoref{actual-classification}, this is the intersection of finitely many constructible sets: the set of simple algebras is constructible by \autoref{sim-const}, the set of algebras whose underlying objects inject into $\ind{B}(t)$ is constructible by \autoref{ind-mon}, and for each $A$ the set of algebras not isomorphic to $\ind{A}(t)$ is constructible by \autoref{ind-iso}. The conclusion follows.
\end{proof}

\begin{lem}\label{large-ordinary}
For all $k$ and $B$, if $n$ is sufficiently large, then $n$ is $(k,B)$-ordinary.
\end{lem}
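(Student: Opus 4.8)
The plan is to show that for $n$ sufficiently large, every simple algebra $A$ in $\rep{n}$ of level $\leq k$ that injects into $\ind{B}(n)$ is in fact isomorphic to $\ind{C}(n)$ for some simple algebra $C$ in $\Rep{k'}$ with $k' \leq k$, so that no such $n$ is $(k,B)$-exotic. The key leverage is \autoref{proj-equiv}: for $n > 2k$, the projection functor $\rep{n} \to \Rep{n}$ restricts to an additive equivalence onto its image on $\rep{n}^{(k)}$, and this equivalence is monoidal, so it carries simple algebras of level $\leq k$ in $\rep{n}$ to certain algebras in $\Rep{n}$ and reflects injections and isomorphisms of algebras.

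First I would fix $k$ and $B$ and take $n$ large (how large will be pinned down along the way, but certainly $n > 2k$ to start). Given a simple algebra $A$ in $\rep{n}$ with level $\leq k$ and an object injection $A \hookrightarrow \ind{B}(n)$, I push everything through the projection functor $P : \rep{n}^{(k)} \to \Rep{n}$. Since $P$ is monoidal and fully faithful on this subcategory, $P(A)$ is an algebra in $\Rep{n}$, it is simple there (a nontrivial quotient in $\Rep{n}$ would pull back through the equivalence to a nontrivial quotient of $A$, as $\rep{n}^{(k)}$ is closed under quotients of level-$\leq k$ objects), and it embeds as an object into $P(\ind{B}(n)) = \Ind{k}{n}(B \boxtimes \triv)$, which is $\C[S_n/\text{(parabolic)}]$-type; in any case $P(A)$ is a simple algebra in $\Rep{n}$, hence by the classification in \autoref{app} (the fact that simple algebras in $\Rep{n}$ are exactly $\C[S_n/H]$) we have $P(A) \cong \C[S_n/H]$ for some subgroup $H \leq S_n$. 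The next step is to control $H$: because $P(A)$ has "level $\leq k$" in the sense of being the image of a level-$\leq k$ object, its underlying representation has dimension $[S_n : H]$ bounded in terms of $k$ and the irreducibles that can appear, forcing $H$ to contain $S_{n-k'}$ (the pointwise stabilizer of the last $n - k'$ points, up to conjugacy) for some $k' \leq k$ once $n$ is large — this is exactly the kind of stabilization statement that should be available from the $\Rep{n}$-side analysis in \autoref{app}. So $H$ is conjugate to $H_0 \times S_{n-k'}$ for a fixed subgroup $H_0 \leq S_{k'}$, and $\C[S_n/H] \cong \Ind{k'}{n}(\C[S_{k'}/H_0] \boxtimes \triv) = \ind{C}(n)$ where $C = \C[S_{k'}/H_0]$ is a simple algebra in $\Rep{k'}$.

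Finally, I would transport this isomorphism back across $P$: since $P$ is fully faithful on $\rep{n}^{(k)}$ and monoidal, and both $A$ and $\ind{C}(n)$ lie in $\Alg\rep{n}^{(k)}$ with $P(A) \cong P(\ind{C}(n))$ as algebras in $\Rep{n}$ (using that $P(\ind{C}(n)) = \Ind{k'}{n}(C \boxtimes \triv)$, which is the content of the projection commuting with induction for large $n$, as used in the proof that $\ind{A}(t)$ is simple for large $t$), we get $A \cong \ind{C}(n)$ as algebras in $\rep{n}$. Hence $n$ is $(k,B)$-ordinary. The main obstacle I anticipate is the middle step: showing that the subgroup $H$ arising from a simple quotient-of-$\h^{\otimes k}$-type algebra must, for large $n$, be of the product form $H_0 \times S_{n-k'}$ — i.e.\ genuinely controlling which $\C[S_n/H]$ have small level — and checking carefully that the projection functor genuinely commutes with induction in the needed range so that the algebra structures (not just underlying objects) match up. Both of these should reduce to statements proved in \autoref{app} together with \autoref{level-equiv} and \autoref{level-cor}, but assembling them with the right uniform-in-$n$ bounds is where the care is needed.
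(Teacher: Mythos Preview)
Your proposal is correct and follows essentially the same route as the paper: project to $\Rep{n}$ via \autoref{proj-equiv}, observe the image is a simple algebra that injects into $\Ind{k}{n}(B\boxtimes\triv)$, invoke the appendix result (\autoref{level-class}) to identify it as $\Ind{k'}{n}(C\boxtimes\triv)$ for some $k'\leq k$, and pull the isomorphism back through the equivalence. The ``main obstacle'' you anticipate---controlling the subgroup $H$ so that it has product form $H_0\times S_{n-k'}$---is exactly what \autoref{level-class} (together with \autoref{contains-times}) supplies, including the subtlety that being in the image of the projection rules out $H\subseteq A_n$.
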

\begin{proof}
If $n > 2k$, then the projection functor $\rep{n} \to \Rep{n}$ is an equivalence onto its image, by \autoref{proj-equiv}. Let $A$ be a simple algebra in $\rep{n}$, and $\tilde{A}$ its image under the projection in $\Rep{n}$. The algebra $\tilde A$ is simple, because any quotient in $\Rep{n}$ would necessarily be in the image of the projection, hence would imply the existence of a quotient in $\rep{n}$, contradicting the simplicity of $A$.  For sufficiently large $n$, it follows from \autoref{level-class} that $\tilde{A}$ is isomorphic to the induction of some simple algebra $B$ from $\Rep{k'}$ for some $k'\leq k$. Then $A\cong \ind{B}(t)$, as desired.
\end{proof}

\begin{thm}\label{main}
For all $k$ and $B$, the set of $(k,B)$-ordinary numbers is cofinite and includes all transcendentals.
\end{thm}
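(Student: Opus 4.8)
The plan is to combine the two facts just established — that the set of $(k,B)$-ordinary numbers is constructible, and (by \autoref{large-ordinary}) that it contains every sufficiently large natural number — with a Galois-invariance argument in the spirit of \autoref{con-cor-lem}. First I would note that, the set of $(k,B)$-ordinary numbers being constructible, \autoref{con-lem} forces its complement $E$, the set of $(k,B)$-exotic numbers, to be finite or cofinite. Since \autoref{large-ordinary} puts every sufficiently large natural number outside $E$, the set $E$ omits infinitely many points, hence is not cofinite, hence is finite. In particular the set of $(k,B)$-ordinary numbers is already cofinite, and all that remains is to check that $E$ contains no transcendental number.

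For that I would show $E$ is stable under the action of $\Gal(\C/\overline{\mathbb Q})$ on $\C$, and then invoke the fact (used already in the proof of \autoref{con-cor-lem}) that this group acts transitively on the infinite set of transcendentals, so that a finite $\Gal(\C/\overline{\mathbb Q})$-stable set can meet no transcendental orbit. To prove the stability, fix $\theta\in\Gal(\C/\overline{\mathbb Q})$ and $t\in E$, and let $A$ be a simple algebra in $\rep{t}$ witnessing that $t$ is $(k,B)$-exotic; the claim is that $\theta_*A$ witnesses the same for $\theta(t)$. Simplicity of $\theta_*A$ is recorded in \autoref{background}. That $\theta_*A$ still has level $\leq k$ follows since $\theta_*$ is an additive monoidal equivalence sending $\h$ to $\h$, so it preserves the property, characterized in \autoref{level-equiv}, of having level at most $k$. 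Applying $\theta_*$ to the injection $A\hookrightarrow\ind{B}(t)$ and using \autoref{comm-lem} — with the identity functor in the $\Rep{k}$ slot, so that $B$ is left untouched and only $\triv$ is moved, to $\triv$ — produces an injection $\theta_*A\hookrightarrow\theta_*(\ind{B}(t))\cong\ind{B}(\theta(t))$. Finally, if $\theta_*A$ were isomorphic as an algebra to $\ind{C}(\theta(t))$ for some $k'$ and some simple algebra $C\in\Rep{k'}$, then applying $\theta_*^{-1}$ and \autoref{comm-lem} once more would give an algebra isomorphism $A\cong\ind{C}(t)$ with the same simple algebra $C$, contradicting the third defining property of the witness $A$. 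Hence $\theta(t)\in E$, so $E$ is $\Gal(\C/\overline{\mathbb Q})$-stable.

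Assembling these: $E$ is finite and $\Gal(\C/\overline{\mathbb Q})$-stable, hence disjoint from the set of transcendentals, so every transcendental number is $(k,B)$-ordinary; this can also be phrased as the application of \autoref{con-cor-lem} to the constructible set $S$ of $(k,B)$-ordinary numbers inside $V=\A$ with $r$ the identity and $\Theta=\theta$. I expect the only step requiring genuine care to be the Galois-invariance claim, and within it the bookkeeping of how \autoref{comm-lem} acts on the two tensor factors; what makes this go through cleanly is that induction out of $\Rep{k}$ only ever inserts the unit object $\triv$ into the $\rep{t-k}$ factor, which $\theta_*$ fixes, so no rationality properties of the representations of $S_k$ are needed.
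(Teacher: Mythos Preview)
Your argument is correct and follows the paper's approach exactly: combine the constructibility of the $(k,B)$-ordinary set, \autoref{large-ordinary}, and \autoref{con-cor-lem}. The paper's proof is a two-line invocation of these ingredients, whereas you spell out in detail the Galois-stability hypothesis of \autoref{con-cor-lem} (via \autoref{comm-lem} and the level characterization in \autoref{level-equiv}) that the paper leaves implicit.
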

\begin{proof}
This follows from \autoref{large-ordinary} and \autoref{con-cor-lem}. Since for each $k$ and $B$, the set of $(k,B)$-ordinary numbers is constructible and contains all sufficiently large natural numbers, it is cofinite and includes all transcendentals.
\end{proof}

\begin{thm}\label{main-2}
For all transcendental $t$ and simple commutative algebras $A\in\rep{t}$, there is a $k$ and simple commutative algebra $B\in\Rep{k}$ with $A\cong\Ind{k}{t}(B\boxtimes\triv)$, as algebras.
\end{thm}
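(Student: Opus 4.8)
The plan is to read the statement off from \autoref{main} by unwinding the definition of an ordinary number. Let $t$ be transcendental and let $A\in\rep{t}$ be a simple commutative algebra; let $k$ be the level of its underlying object. The crux of the argument is to produce an object $B\in\Rep{k}$ and a monomorphism of objects $A\hookrightarrow\ind{B}(t)$. Granting this: by \autoref{main}, $t$ is $(k,B)$-ordinary, hence not $(k,B)$-exotic; but $A$ is a simple algebra of level $\leq k$ which injects, as an object, into $\ind{B}(t)$, so the only way for $A$ to fail to witness $(k,B)$-exoticness is for the last clause of that definition to fail, i.e.\ for $A$ to be isomorphic \emph{as an algebra} to $\ind{C}(t)=\Ind{k'}{t}(C\boxtimes\triv)$ for some $k'$ and some simple commutative algebra $C\in\Rep{k'}$. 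Since $t$ is transcendental it lies in $U_C$, so this makes sense, and after renaming $k'$ to $k$ and $C$ to $B$ it is exactly the theorem.

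To produce $B$, I would take $B=\C[S_k]^{\oplus M}$ for $M$ sufficiently large, where $\C[S_k]$ is the regular representation. The key sub-claim is that for transcendental $t$, every indecomposable object of $\rep{t}$ of level $\leq k$ occurs with strictly positive multiplicity in $\ind{\C[S_k]}(t)$. This is a constructibility argument of the type used in \autoref{con-cor-lem}: the set of $t$ for which all these multiplicities are positive is constructible, and it contains every sufficiently large integer $n$, since for such $n$ the multiplicity can be computed in $\Rep{n}$, where $\ind{\C[S_k]}(n)$ becomes $\mathrm{Ind}_{S_{n-k}}^{S_n}\triv$ --- the permutation module on injective $k$-tuples in $[n]$ --- and an iterated branching-rule computation shows that every irreducible of the relevant shape appears. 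Hence that set is cofinite and contains all transcendentals. Given the sub-claim, for $M$ at least the largest multiplicity of an indecomposable summand of $A$, a comparison of multiplicities in the semisimple category $\rep{t}$ gives a monomorphism $A\hookrightarrow\ind{\C[S_k]^{\oplus M}}(t)=\ind{B}(t)$, since each indecomposable summand of $A$ has level $\leq k$.

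The first paragraph is purely formal once \autoref{main} is in hand; the genuine content is the construction of $B$, and that is where I expect the only real (albeit mild) obstacle to be. I pass through an explicit $B$ built from $\C[S_k]$, rather than invoking \autoref{level-equiv} directly, because that proposition is phrased for indecomposable objects whereas the underlying object of a simple algebra is generally decomposable; absorbing all the multiplicities into a single induced object avoids having to patch together embeddings of the individual summands.
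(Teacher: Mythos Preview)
Your overall strategy is correct and matches the paper's: reduce to \autoref{main} by producing a $B$ with $A\hookrightarrow\ind{B}(t)$ as objects, then unwind the definition of $(k,B)$-ordinary. The paper's own proof is a two-line argument that does exactly this.

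The one difference is in how $B$ is produced. The paper simply asserts ``$A$ is of level $k$; then there is a $B'\in\Rep{k}$ such that $A\hookrightarrow\ind{B'}(t)$,'' implicitly invoking \autoref{level-equiv}. You instead build $B=\C[S_k]^{\oplus M}$ by hand and run a fresh constructibility argument to show that every indecomposable of level $\leq k$ occurs in $\ind{\C[S_k]}(t)$ for transcendental $t$. Your route is self-contained and correct, but it is more work than necessary: the concern you raise in your last paragraph---that \autoref{level-equiv} is stated only for indecomposables---is easily dispatched. Each indecomposable summand of $A$ has level $\leq k$ and hence embeds in $\h^{\otimes k}$ (using the third characterization in \autoref{level-equiv} together with the unit map $\triv\hookrightarrow\h$ to pass from $\h^{\otimes k'}$ to $\h^{\otimes k}$), so $A\hookrightarrow(\h^{\otimes k})^{\oplus N}$ for some $N$; and $(\h^{\otimes k})^{\oplus N}$ is itself a summand of an object of the form $\ind{B'}(t)$. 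So the paper's shortcut is legitimate, and your detour through a second constructibility argument, while valid, is avoidable.
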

\begin{proof}
This is immediate from \autoref{main}. Suppose that $A$ is of level $k$; then there is a $B'\in\Rep{k}$ such that $A\hookrightarrow\ind{B'}(t)$. Then since $t$ is $(k,B')$-ordinary, the conclusion follows.
\end{proof}

The arguments of this paper used surprisingly few properties specific to simple commutative algebras. It was important that the induction of an algebra was canonically an algebra, and that there were only finitely many algebras in each $\Rep{k'}$. The constructibility methods of this paper should therefore, in priniciple, allow a similar classification of simple associative or Lie algebras in $\rep{t}$ as interpolations of families of corresponding structures in $\Rep{n}$.

\appendix

\section{Simple Commutative Algebras in $\Rep{n}$}\label{app}
Here, we classify the simple commutative algebras internal to the actual categories of representations of the symmetric group. Since representations of $S_n$ are vector spaces with extra structure, a simple algebra turns out to be an ordinary algebra over $\C$ with a compatible $S_n$-action, such that no nontrivial ideal is closed under the action.
\begin{lem}
If $X$ is a set with a transitive action of $S_n$, then $\C[X]$ with the pointwise algebra structure is simple.
\end{lem}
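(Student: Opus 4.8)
The plan is to reduce the statement to a wholly elementary fact about commutative $\C$-algebras. As recalled at the start of this appendix, since $\Rep{n}$ is semisimple, an algebra object in $\Rep{n}$ is simple exactly when it has no $S_n$-stable ideal other than $0$ and itself; so it suffices to classify the $S_n$-stable ideals of $\C[X]$. First I would note that $X$ is finite: a transitive $S_n$-set is isomorphic to $S_n/H$ for some subgroup $H\leq S_n$, and $S_n$ is finite. Consequently, as a commutative $\C$-algebra, $\C[X]$ with its pointwise multiplication is the algebra $\mathrm{Fun}(X,\C)$ of all $\C$-valued functions on a finite set, i.e.\ a finite product of copies of $\C$.

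Next I would classify the ideals of $R:=\mathrm{Fun}(X,\C)$, ignoring the $S_n$-action for the moment. For $T\subseteq X$ write $e_T\in R$ for the indicator function of $T$; since $e^2=e$ in $R$ forces $e(x)\in\{0,1\}$ pointwise, these $e_T$ are precisely the idempotents of $R$. For any ideal $I$, letting $T=\{x\in X : f(x)\neq 0 \text{ for some }f\in I\}$ be the union of the supports of elements of $I$, one checks that $I=e_T R=\{f\in R : f|_{X\setminus T}=0\}$. Thus $T\mapsto e_T R$ is an inclusion-preserving bijection from subsets of $X$ to ideals of $R$, sending $\emptyset$ to $0$ and $X$ to $R$.

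Finally, the $S_n$-action on $R$ is $(\sigma\cdot f)(x)=f(\sigma^{-1}x)$, so $\sigma\cdot e_T=e_{\sigma T}$, and therefore the ideal $e_T R$ is $S_n$-stable if and only if the subset $T$ is $S_n$-stable. Since the action of $S_n$ on $X$ is transitive, the only $S_n$-stable subsets are $\emptyset$ and $X$, so the only $S_n$-stable ideals of $R$ are $0$ and $R$. By the reduction above, $\C[X]$ is simple. (Taking $X=S_n/H$, this exhibits $\C[S_n/H]$ as a simple algebra, matching the description in the introduction.)

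I do not expect a genuine obstacle here: the argument just packages the ideal structure of a finite product of fields together with the transitivity hypothesis. The two points deserving a word of care are the opening reduction — that the categorical notion of simplicity (no nontrivial quotient) coincides with ``no nontrivial stable ideal'', which is exactly where semisimplicity of $\Rep{n}$ enters — and the fact that every ideal of $\mathrm{Fun}(X,\C)$ is generated by an idempotent, which is where finiteness of $X$ is used.
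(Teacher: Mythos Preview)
Your proof is correct and follows essentially the same idea as the paper's: both exploit that the ideals of $\C[X]=\mathrm{Fun}(X,\C)$ for finite $X$ are indexed by subsets of $X$, so $S_n$-stable ideals correspond to $S_n$-stable subsets, of which transitivity leaves only $\emptyset$ and $X$. The only difference is organizational --- the paper argues directly that any nonzero stable ideal contains some indicator function $\tilde x_0$ (by multiplying a nonzero element by $\tfrac{1}{c_0}\tilde x_0$) and hence, by transitivity, all of them, whereas you first classify all ideals via idempotents and then pick out the stable ones.
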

\begin{proof}
By ``the pointwise algebra structure'', we mean that $\C[X]$ is considered as the algebra of functions $X\to\C$ with pointwise multiplication. The $S_n$-action is given by $(\sigma f)(x) = f(\sigma^{-1}x)$. If $x\in X$, write $\tilde x$ for the indicator function of $x$: $\tilde x(y) = 1$ if $x = y$, and $0$ else. The set of indicator functions forms a basis. Now, suppose that there is an ideal closed under $S_n$. If it contains a nonzero element, say $c_0 \tilde x_0 + c_1 \tilde x_1+\ldots$ with $c_0 \neq 0$, it then contains $(\frac1{c_0} \tilde x_0) (c_0 \tilde x_0 + \ldots) = \tilde x_0$. Then, for all $x$, there is $\sigma\in S_n$ taking $x_0$ to $x$, and $\sigma \tilde{x}_0 = \widetilde{\sigma x_0} = \tilde x$, so all basis elements $\tilde x$ must be in the ideal, which is therefore the entire algebra.
\end{proof}

Any subgroup $H$ of $S_n$ gives an transitive $S_n$-action on $S_n/H$, the cosets of $H$ with the action of left multiplication. Two subgroups give isomorphic $S_n$-actions iff they are conjugate. Every transitive $S_n$-action is of this form; given such an action, $H$ can be recovered as the stabilizer of any element.

\begin{lem}\label{actual-classification}

Every simple algebra in $\Rep{k}$ is of the form $\C[S_n/H]$. 

\end{lem}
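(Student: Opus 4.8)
The plan is to use the (symmetric monoidal) fiber functor from $\Rep{k}$ to complex vector spaces to reduce the statement to ordinary commutative algebra, and then run a nilradical-plus-idempotents argument. First I would unwind the internal notion: since the forgetful functor $\Rep{k}\to\mathbf{Vec}_\C$ is symmetric monoidal, a commutative algebra object $A\in\Rep{k}$ is precisely a finite-dimensional commutative $\C$-algebra equipped with an action of $S_k$ by algebra automorphisms, and a quotient of $A$ in $\Rep{k}$ is the quotient by an ordinary ideal that happens to be $S_k$-stable. Hence $A$ is simple exactly when its only $S_k$-stable ideals are $0$ and $A$.

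Next I would show $A$ is reduced. The nilradical $\mathrm{nil}(A)$ is a characteristic ideal — preserved by every algebra automorphism, in particular by the $S_k$-action — and it is proper since $1\notin\mathrm{nil}(A)$. Simplicity then forces $\mathrm{nil}(A)=0$. A reduced finite-dimensional commutative $\C$-algebra is Artinian, hence a finite product of Artinian local rings, and reducedness collapses each local factor to a field, which over $\C$ must be $\C$ itself. Therefore $A\cong\C^X$ as a $\C$-algebra, where $X=\Spec A$ is the finite set of $\C$-algebra homomorphisms $A\to\C$ (equivalently, the set of primitive idempotents of $A$).

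Then I would transport the group action. Since $S_k$ acts on $A$ by algebra automorphisms it acts on $X=\Spec A$ by permutations, and the isomorphism $A\cong\C^X$ is $S_k$-equivariant when $\C^X$ carries the action $(\sigma f)(x)=f(\sigma^{-1}x)$, so $A\cong\C[X]$ with its pointwise algebra structure. If $S_k$ did not act transitively on $X$, we could write $X=X_1\sqcup X_2$ with both parts nonempty and $S_k$-stable; then $\{f\in\C[X] : f|_{X_1}=0\}$ is a nonzero proper $S_k$-stable ideal, contradicting simplicity. So the action on $X$ is transitive, whence $X\cong S_k/H$ for $H$ the stabilizer of a chosen point, and $A\cong\C[S_k/H]$.

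The step I would be most careful about is the very first one — verifying that simplicity of the algebra object in $\Rep{k}$ is genuinely equivalent to the absence of nonzero proper $S_k$-stable ideals, i.e.\ that internal ideals (kernels of algebra-quotient maps in $\Rep{k}$) are exactly the $S_k$-stable ideals in the classical sense. Once that dictionary is in place, the remainder is standard finite-dimensional commutative algebra. Note this is the converse to the preceding proposition, which already established that each $\C[S_k/H]$ is simple; together the two give the full classification.
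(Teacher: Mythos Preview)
Your argument is correct and essentially identical to the paper's: both kill the nilradical by noting it is $S_k$-stable and proper, identify the reduced finite-dimensional algebra with $\C[X]$ for $X=\Spec A$, and then use that a non-transitive action yields a nonzero proper stable ideal (functions vanishing on a proper stable subset/orbit). The only cosmetic differences are that you invoke the Artinian structure theorem where the paper speaks of global sections on a reduced finite scheme, and you split $X$ into two stable pieces rather than intersecting the primes in a single orbit.
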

\begin{proof}

If $A$ is a simple algebra in $\Rep{n}$, consider the spectrum of the underlying ring, $\operatorname{Spec} A$. Since $A$ is a finite dimensional complex algebra, $\Spec A$ is finite. Also, $\Spec A$ is reduced, because the ideal of nilpotent elements is fixed by $S_n$ (if $a^k = 0$, then $(\sigma \cdot a)^k = \sigma \cdot a^k = 0$) and is therefore the zero ideal. It then follows that global sections of the structure sheaf on $\Spec A$  correspond exactly with continuous functions from the discrete space of closed points of $\Spec A$ to $\C$, and the algebra structures also correspond. Thus $A \cong \C[\Spec A]$.

There is an action of $S_n$ on $\Spec A$, defined by $\sigma \cdot p := \{\sigma\cdot  a | a \in p\}$. This action commutes with the above isomorphism. The action must be transitive, since if not, consider a nontrivial orbit $\mathcal O$. The intersection of $\mathcal O$ is an ideal fixed by $S_n$. It is proper, since all elements of $\mathcal O$ are. It is nonzero, because if the intersection of the elements of $\mathcal O$ were zero, any element of $\C[\Spec A]$ vanishing on $\mathcal O$ would necessarily be zero, which would imply $\mathcal O = \Spec A$. The result then follows from the discussion before the proposition.
\end{proof}

The above holds, \emph{mutatis mutandis}, for an arbitrary finite group $G$ in place of $S_n$: the simple algebras in $\mathbf{Rep}(G)$ are precisely $\C[G/H]$ for $H$ a subgroup of $G$.

From this classification, we obtain the following.
\begin{lem}\label{ind-simp}
If $A$ is a simple algebra in $\Rep{k}$, the induced algebra structure on $\Ind{k}{n}(A\boxtimes \triv)$ is simple.
\end{lem}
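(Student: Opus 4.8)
The plan is to identify the induced algebra explicitly as the function algebra on a transitive $S_n$-set, and then invoke the fact established at the opening of this appendix that function algebras on transitive $S_n$-sets are simple. By \autoref{actual-classification} we may assume $A=\C[S_k/H]$ for a subgroup $H\le S_k$, with its pointwise multiplication. Since $\triv$ carries the trivial (unital) algebra structure, $A\boxtimes\triv$, regarded as an algebra in $\Rep{k}\boxtimes\Rep{n-k}\simeq\mathbf{Rep}(S_k\times S_{n-k})$, is simply $\C[S_k/H]$ with $S_{n-k}$ acting trivially; equivalently, it is the pointwise function algebra $\C[Y]$ on the transitive $(S_k\times S_{n-k})$-set $Y=(S_k\times S_{n-k})/(H\times S_{n-k})$.

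The key step is to make the lax monoidal structure carried by induction concrete. For a subgroup $K\le G$ and a finite $K$-set $Z$, I claim that $\operatorname{Ind}_K^G$, applied with its canonical lax monoidal structure to the function algebra $\C[Z]$, yields the function algebra $\C[G\times_K Z]$ on the induced $G$-set $G\times_K Z$. Using $\operatorname{Ind}_K^G M=\C[G]\otimes_{\C[K]}M$ and a choice of coset representatives for $G/K$, one sees that as a vector space $\operatorname{Ind}_K^G\C[Z]$ is a direct sum of copies of $\C[Z]$ indexed by $G/K$; unwinding the lax structure map then shows that the induced multiplication multiplies two elements indexed by the same coset using the ($g$-conjugated) multiplication of $\C[Z]$, and sends the product of elements indexed by distinct cosets to $0$. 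This is exactly the pointwise multiplication on $\C[\,\bigsqcup_{G/K}Z\,]=\C[G\times_K Z]$. One may also package this as the assertion that the $\Spec$ antiequivalence between finite-dimensional reduced algebras in $\mathbf{Rep}(G)$ and finite $G$-sets intertwines the lax monoidal functor $\operatorname{Ind}_K^G$ with the functor $Z\mapsto G\times_K Z$ (here $\operatorname{Ind}_K^G\C[Z]$ is reduced, since its underlying $\C$-algebra is a product of copies of the reduced algebra $\C[Z]$).

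Applying this with $G=S_n$, $K=S_k\times S_{n-k}$, and $Z=Y=(S_k\times S_{n-k})/(H\times S_{n-k})$ gives $G\times_K Z\cong S_n/(H\times S_{n-k})$, so $\Ind{k}{n}(A\boxtimes\triv)\cong\C[S_n/(H\times S_{n-k})]$ as algebras, with its pointwise structure. Since $S_n$ acts transitively on the coset space $S_n/(H\times S_{n-k})$, the opening result of this appendix shows this algebra is simple, which completes the proof.

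I expect the main obstacle to be the middle paragraph: one must unwind the abstractly defined (adjoint) induction functor and its lax monoidal structure far enough to recognize the induced product as the genuine pointwise product --- equivalently, verify directly that the $\Spec$ antiequivalence carries induction of algebras to the operation $G\times_K(-)$ on sets. The remaining steps are bookkeeping with coset spaces together with the simplicity criterion already established at the start of this appendix.
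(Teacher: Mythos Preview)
Your argument is correct and follows the same route as the paper: identify $A$ with $\C[S_k/H]$, show that $\Ind{k}{n}(A\boxtimes\triv)\cong\C[S_n/(H\times S_{n-k})]$ as algebras, and invoke the opening lemma on transitive actions. The paper's proof simply asserts this isomorphism in one line, whereas you spell out why the lax monoidal structure on induction carries $\C[Z]$ to $\C[G\times_K Z]$; that extra paragraph is genuine added detail rather than a different approach.
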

\begin{proof}
If $A \cong \C[S_k/H]$, then $\Ind{k}{n}(A\boxtimes\triv) \cong \C[S_n/H\times S_{n-k}]$ as algebras.
\end{proof}

We require the following group-theoretic fact to establish that algebras of level $k$ are induced from $\Rep{k'}$ for $k'\leq k$.

\begin{lem}\label{contains-times}
For $n > 2k+1$, if $H$ is a subgroup of $S_n$ containing $S_{n-k}$, it is conjugate to a subgroup of the form $H'\times S_{n-k'}$ for some $k'\leq k$ and subgroup $H$ of $S_{k'}$.
\end{lem}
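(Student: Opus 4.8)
The plan is to argue entirely at the level of the natural action of $S_n$ on $[n]$. Write $S_{n-k}$ for the pointwise stabilizer of $M:=\{1,\dots,k\}$, i.e.\ the full symmetric group on $B:=\{k+1,\dots,n\}$, so $|B|=n-k$. The hypothesis $n>2k+1$ is used only through the inequality $|B\cap g(B)|\ge |B|+|g(B)|-n=n-2k\ge 1$, valid for every $g\in S_n$; so any two $S_n$-translates of $B$ overlap.

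The heart of the proof is to exhibit a large $H$-invariant set on which $H$ acts as the full symmetric group. Put $\Omega:=\bigcup_{g\in H}g(B)$. Since $g\,\mathrm{Sym}(B)\,g^{-1}=\mathrm{Sym}(g(B))$ and $\mathrm{Sym}(B)=S_{n-k}\subseteq H$, each $\mathrm{Sym}(g(B))$ lies in $H$. Every $g(B)$ meets $B=e\cdot B$, so the intersection graph of the family $\{g(B):g\in H\}$ is connected, and the symmetric groups on a connected family of finite sets generate the symmetric group on the union; hence $\mathrm{Sym}(\Omega)\subseteq H$. Moreover $\Omega$ is $H$-invariant, being a union of $H$-translates of $B$ ($h\cdot\Omega=\bigcup_{g}(hg)(B)=\Omega$), so $D:=[n]\setminus\Omega$ is $H$-invariant as well; and $B\subseteq\Omega$ forces $D\subseteq M$, so $k':=|D|\le k$.

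It then remains to peel off the $\mathrm{Sym}(\Omega)$ factor. Each $h\in H$ preserves $\Omega$ and $D$, hence factors as a commuting product $h=\alpha\beta$, where $\alpha$ acts as $h$ on $\Omega$ and trivially on $D$ and $\beta$ acts as $h$ on $D$ and trivially on $\Omega$. Since $\alpha\in\mathrm{Sym}(\Omega)\subseteq H$, also $\beta=\alpha^{-1}h\in H$, so $\beta\in H^D:=H\cap\mathrm{Sym}(D)$; thus $H=\mathrm{Sym}(\Omega)\times H^D$ internally, the factors being supported on the disjoint sets $\Omega$ and $D$. Finally, conjugating $H$ by any permutation carrying $D$ onto $\{1,\dots,k'\}$ (and hence $\Omega$ onto $\{k'+1,\dots,n\}$) turns it into $H'\times S_{n-k'}$ with $S_{n-k'}=\mathrm{Sym}(\{k'+1,\dots,n\})$ and $H'\le\mathrm{Sym}(\{1,\dots,k'\})=S_{k'}$, which is the required form.

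The one step that carries any real content is the assertion that the symmetric groups of an overlapping (connected) family of subsets generate the symmetric group on the union: two transpositions sharing a point conjugate to give the transposition on the other two points, so a single common point already glues two symmetric groups together, and an induction along a spanning tree of the intersection graph finishes it. This is exactly where the numerical hypothesis enters, since it is what guarantees that the translates $g(B)$ are forced to overlap; the rest is bookkeeping with supports of permutations.
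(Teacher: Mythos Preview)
Your proof is correct and is a genuinely cleaner route than the one in the paper. Both arguments identify the same set---the $H$-orbit $\Omega$ containing $B=\{k+1,\dots,n\}$---and show that $H$ contains $\mathrm{Sym}(\Omega)$, after which the direct-product decomposition is immediate. The paper reaches $\mathrm{Sym}(\Omega)\subseteq H$ by a three-case analysis on transpositions, invoking pigeonhole (this is where the paper uses $n-k>k+1$) to locate an auxiliary element $z$ that a particular $\sigma\in H$ keeps inside the last $n-k$ points. You instead observe that the $H$-translates $g(B)$ all carry full symmetric groups inside $H$ (since $g\,S_{n-k}\,g^{-1}\subseteq H$ for $g\in H$) and pairwise overlap by the size bound $|g(B)\cap B|\ge n-2k\ge 1$, so the elementary ``overlapping symmetric groups generate the symmetric group of the union'' lemma finishes the job in one stroke.

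What your approach buys is conceptual clarity and a slightly sharper hypothesis: your overlap argument only needs $n-2k\ge 1$, i.e.\ $n\ge 2k+1$, whereas the paper's pigeonhole step genuinely uses $n>2k+1$. What the paper's approach buys is that it is entirely elementary and self-contained at the level of individual transpositions, with no appeal to a separate glueing lemma. Your final factorization step (writing $h=\alpha\beta$ and noting $\alpha\in\mathrm{Sym}(\Omega)\subseteq H$ forces $\beta\in H$) is also more direct than the paper's, which simply asserts the product form after observing $H\subseteq S_{k-j}\times S_{n-(k-j)}$.
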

\begin{proof}
Suppose $H$ is not contained in $S_k\times S_{n-k}\subset S_n$, for if it does not we are finished. Suppose that (with the standard action of $S_n$ on the finite set $[n]$) the size of the orbit under $H$ of $n$ is $j + n - k$, where $j>0$. Without loss of generality, we can pass to a conjugate $H'$ and suppose that the elements of the orbit consist of the last $n-(k-j)$, in particular including the last $n-k$.
We claim that $H'$ contains $S_{n-(k-j)}$, acting on the last $n-(k-j)$ elements.
It suffices to show that all two-cycles swapping any two of the last $n-(k-j)$ elements belong to $H'$; there are three possible cases.

 If both elements are among the last $n-k$, then the two-cycle $\alpha$ swapping them belongs to $S_{n-k}$ and hence to $H'$.

If one is in the middle $j$, and the other in the last $n-k$, call the former $x$ and the latter $y$. Suppose $\sigma(x) = n$ for some $\sigma\in H'$. Since $n - k > k + 1 \geq j + 1$, by the pigeonhole principle, there exists a $z$ among the last $n-k$ which is sent under $\sigma$ to neither any of the middle $j$ elements nor the last element $n$. Let $\alpha \in H'$ be the two-cycle swapping $y$ and $z$, and let $\beta \in H'$ be the two-cycle swapping $\sigma(z)$ and $n = \sigma(x)$. Then we can swap $x$ and $y$ via $\alpha\sigma^{-1}\beta\sigma\alpha\in H'$.

Finally, if both elements are among the middle $j$, suppose that they are $x$ and $x'$. Let $\sigma(x) = n$ for $\sigma\in H'$. By the previous two cases, we can swap $n$ and $\sigma(x')$ no matter what $\sigma(x')$ is. Then we can swap $x$ and $x'$ via $\sigma^{-1}\alpha\sigma\in H'$, where $\alpha\in H'$ is the two-cycle swapping $n$ and $\sigma(x')$.

Now, $H'$ is contained in in $S_{k-j}\times S_{n-(k-j)}$, because the last $n-(k-j)$ elements are all in one orbit which contains none of the first $k-j$. Then $H'$ is necessarily of the form $H''\times S_{n-k'}$, for $H''$ a subgroup of $S_{k'}$. 
\end{proof}

\begin{lem}\label{level-class}
For all objects $B\in\Rep{k}$, for large enough $n$, any simple algebra in $\Rep{n}$ which is in the image of the projection from $\rep{n}$ and injects into $\Ind{k}{n}(B\boxtimes\triv)$ as objects is isomorphic to an algebra induced from $\Rep{k'}\boxtimes\Rep{n-k'}$, with the $\Rep{n-k'}$ part trivial, for some $k'\leq k$.
\end{lem}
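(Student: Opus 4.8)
The plan is to reduce the statement to the group-theoretic fact \autoref{contains-times}. Let $\tilde A$ be a simple algebra in $\Rep{n}$ satisfying the hypotheses. By \autoref{actual-classification} we may write $\tilde A\cong\C[S_n/H]$ for some subgroup $H\leq S_n$, so it suffices to show that, for $n$ large, $H$ contains a conjugate of $S_{n-m}$ for some $m$ bounded in terms of $k$: indeed, once $n>2m+1$, \autoref{contains-times} then gives that $H$ is conjugate to $H'\times S_{n-k'}$ for some $k'\leq m$ and subgroup $H'\leq S_{k'}$, whence, by the computation in the proof of \autoref{ind-simp}, $\tilde A\cong\C[S_n/(H'\times S_{n-k'})]\cong\Ind{k'}{n}(\C[S_{k'}/H']\boxtimes\triv)$, the induction from $\Rep{k'}\boxtimes\Rep{n-k'}$ of the simple algebra $\C[S_{k'}/H']$ with trivial $\Rep{n-k'}$ component. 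A final degree count will bring $k'$ down to $\leq k$.

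First I would extract the constraint on $H$ coming from the monomorphism $\tilde A\hookrightarrow\Ind{k}{n}(B\boxtimes\triv)$. As $\Rep{n}$ is semisimple this monomorphism splits, so the irreducible constituents of $\C[S_n/H]$ form a sub-multiset of those of $\Ind{k}{n}(B\boxtimes\triv)$, which as an object of $\Rep{n}$ is the ordinary induced representation; since $B$ is a representation of $S_k$, the Pieri rule shows that every irreducible summand of it is labelled by a partition $\mu\vdash n$ with $\mu_1\geq n-k$. (Alternatively: $\tilde A$ lies in the image of the projection of \autoref{proj-equiv} and injects into the level-$\leq k$ object $\ind{B}(n)$, hence itself has level $\leq k$, giving the same conclusion about its constituents.) Thus every constituent $[\mu]$ of $\C[S_n/H]$ satisfies $\mu_1\geq n-k$, and in particular $[S_n:H]=\dim\C[S_n/H]\leq\sum_{\mu\vdash n,\,\mu_1\geq n-k}(\dim[\mu])^2$, a polynomial in $n$ of degree $2k$.

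The crux --- and what I expect to be the main obstacle --- is to pass from ``every constituent of $\C[S_n/H]$ has first part $\geq n-k$'' to ``$H$ contains a conjugate of $S_{n-m}$ for some bounded $m$'' for $n$ large. Because $[S_n:H]$ is polynomially bounded, for large $n$ the subgroup $H$ has small index in $S_n$, and the standard structure theory of subgroups of small index --- via bounds on the order of primitive groups (Bochert, Praeger--Saxl) and the O'Nan--Scott reduction --- shows that $H$ is, up to conjugacy, contained in a Young subgroup $S_j\times S_{n-j}$ with $j$ bounded in terms of $k$, i.e.~$H$ contains a conjugate of $S_{n-j}$, provided the exceptional subgroups are excluded; those (notably ones related to $A_n$) are ruled out directly here, since e.g.~$\C[S_n/A_n]=\triv\oplus\operatorname{sgn}$ has the constituent $\operatorname{sgn}=[(1^n)]$, whose first part $1$ is $<n-k$ once $n>k+1$. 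Feeding $H\supseteq$ (a conjugate of) $S_{n-j}$ into \autoref{contains-times} and the first paragraph yields $\tilde A\cong\C[S_n/(H'\times S_{n-k'})]$ with $k'\leq j$ bounded; and since this module contains $\C[S_n/(S_{k'}\times S_{n-k'})]=\bigoplus_{i=0}^{k'}[(n-i,i)]$ as a subrepresentation (for $n\geq 2k'$, by pullback of functions along the surjection of $S_n$-sets), the constituent $[(n-k',k')]$ appears in $\C[S_n/H]$, forcing $n-k'\geq n-k$, i.e.~$k'\leq k$, as required. A route to the crux avoiding any appeal to the classification of finite simple groups would instead compute the multiplicities $\dim[\mu]^H=\langle\C[S_n/H],[\mu]\rangle$ via the branching rule together with the orbit structure of $H$ on tuples from $[n]$, showing that failure of $H$ to contain any conjugate of $S_{n-k}$ produces a constituent $[\mu]$ with $\mu_1<n-k$ and $[\mu]^H\neq 0$.
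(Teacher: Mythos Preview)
Your strategy is the paper's: bound $[S_n:H]$ from the injection, invoke the small-index subgroup theorem for $S_n$/$A_n$, exclude $H\subseteq A_n$ via the sign representation, then feed the result into \autoref{contains-times}. The paper's execution differs in two ways worth noting. First, it uses the immediate bound $[S_n:H]=\dim\C[S_n/H]\leq\dim\Ind{k}{n}(B\boxtimes\triv)=\binom{n}{k}\dim B$ rather than your sum-of-squares estimate; combined with the specific citation \cite[Theorem~5.2A]{pgrp} (applied to $H\cap A_n$ inside $A_n$), this yields $A_{n-j}\subseteq H$ up to conjugacy with $j\leq k$ directly, so your final cleanup argument forcing $k'\leq k$ via the constituent $[(n-k',k')]$ is not needed. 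Second, the paper handles the $A_n$ obstruction in full rather than by the single example $H=A_n$: it shows that $H\subseteq A_n$ would make $\operatorname{sgn}$ a summand of $\C[S_n/H]\cong\mathrm{Ind}_{A_n}^{S_n}\mathrm{Ind}_H^{A_n}\triv$, which is impossible for an object in the image of the projection from $\rep{n}$; once $H\not\subseteq A_n$, an odd $h\in H$ upgrades $A_{n-j}\subseteq H$ to $S_{n-j}\subseteq H$. One slip to fix in your write-up: being \emph{contained in} $S_j\times S_{n-j}$ is not at all the same as \emph{containing} $S_{n-j}$; the small-index theorems deliver the latter (more precisely $A_{n-j}\subseteq H$) directly, so state that conclusion rather than the erroneous ``i.e.''.
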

\begin{proof}
Suppose our simple algebra is $\C[S_n/H]$. Looking at the dimensions of the underlying vector spaces of $\C[S_n/H]$ and $\Ind{k}{n}(B\boxtimes\triv)$, we have $[S_n : H] = \frac{n!}{|H|} \leq {n \choose k} \dim B$. We are interested in the index of $H\cap A_n$ in $A_n$. There are two possibilities: either $H = H\cap A_n$, or $H = (H\cap A_n)\cup h(H\cap A_n)$ for $H\ni h \notin A_n$. 

The first case is impossible, for the following reason. If $H\subseteq A_n$, then $\C[S_n/H] \cong \mathrm{Ind}_H^{S_n}(\triv) \cong \mathrm{Ind}_{A_n}^{S_n}(\mathrm{Ind}_H^{A_n}(\triv))$. Since induction is lax monoidal, there is a natural map $\C\to\mathrm{Ind}_H^{A_n}(\triv)$ which can be shown to be injective. Then, inducing this trivial representation from $A_n$ to $S_n$ means that $\C[S_n/H]$ will have the sign representation as a summand. But this is not possible if $\C[S_n/H]$ is in the image of the projection from $\rep{n}$.

So, we know that $|H\cap A_n| = |H|/2$, so $[A_n : H\cap A_n]$ = $\frac{|S_n|/2}{|H|/2} = [S_n : H]$. If $n$ is sufficiently large, ${n \choose k+1} = {n \choose k}\frac{n-k}{k+1}$ is greater than ${n \choose k}\dim B$, so $[A_n : H\cap A_n] = [S_n : H] \leq {n \choose k+1}$. Using \cite[Theorem 5.2A]{pgrp}, we conclude that for some $j\leq k$ and $\sigma \in S_n$, there is an inclusion $\sigma A_{n-j} \sigma^{-1}\subseteq H\cap A_n$. Here, $A_{n-j}$ is considered as a subgroup of $S_n$ as the group of even permutations of the last $n-j$ objects out of the $n$ objects that $S_n$ permutes. Since $A_n$ is normal in $S_n$, and $H$ was only determined up to conjugacy to begin with, we can assume without loss of generality that $\sigma$ is the identity. We know that there exists $H\ni h\notin A_n$; for such an element, $A_{n-j}\cup hA_{n-j} \subseteq H$. It then follows that $S_{n-j}\subseteq H$.

Finally, since for sufficiently large $n$ we can apply \autoref{contains-times}, we conclude that $H$ is conjugate to a subgroup of the form $H'\times S_{n-j'}$ for some $j'\leq j \leq k$. Then our original algebra $\C[S_n/H]$ is isomorphic to $\Ind{j'}{n}(\C[S_{j'}/H']\boxtimes\triv)$, as desired.

\end{proof}

\bibliographystyle{alpha}
\bibliography{biblio}

\begin{thebibliography}{Mat13}

\bibitem[CO11]{cos}
Jonathan Comes and Victor Ostrik.
\newblock On blocks of {D}eligne's category $\operatorname{{R}ep}({S}_t)$.
\newblock {\em Advances in Math.}, 226:1331--1377, 2011.
\newblock arXiv:0910.5695.

\bibitem[Del04]{del}
Pierre Deligne.
\newblock La cat\'egorie des repr\'esentations du groupe sym\'etrique ${S}_t$,
  lorsque $t$ n'est pas un entier naturel.
\newblock In {\em Proceedings of the International Colloquium on Algebraic
  Groups and Homogenous Spaces}, January 2004.

\bibitem[DM96]{pgrp}
John~D. Dixon and Brian Mortimer.
\newblock {\em Permutation Groups}.
\newblock Springer, 1996.

\bibitem[Eti14]{eti}
Pavel Etingof.
\newblock Representation theory in complex rank, {I}.
\newblock {\em Transformation Groups}, 19(2):359--381, 2014.
\newblock arXiv:1401.6321.

\bibitem[Mat13]{mat}
Akhil Mathew.
\newblock Categories parametrized by schemes and representation theory in
  complex rank.
\newblock {\em J. of Algebra}, 381, 2013.
\newblock arXiv:1006.1381.

\end{thebibliography}
\end{document}